 \numberwithin{equation}{section}
\theoremstyle{plain}
\newtheorem{thm}{Theorem}[section]
\newtheorem{cor}[thm]{Corollary}
\newtheorem{lem}[thm]{Lemma}
\newtheorem{prop}[thm]{Proposition}
\newtheorem{conj}[thm]{Conjecture}
\theoremstyle{definition}
\newtheorem{defn}[thm]{Definition}
\newtheorem{ex}[thm]{Example}
\theoremstyle{remark}
\newtheorem{rem}[thm]{Remark}
\newcommand{\N}{\mathbb{N}}
\newcommand{\R}{\mathbb{R}}
\newcommand{\A}{\mathcal{A}}
\newcommand{\I}{\infty}
\newcommand{\bp}{\begin{proof}[\ensuremath{\mathbf{Proof}}]}
\newcommand{\ep}{\end{proof}}
\newcommand{\id}{\text{id}_{\R^d}}
\newcommand{\cU}{{\cal U}}
\newcommand{\cV}{{\cal V}}
\newcommand{\cMp}{{\cal M}_p}
\newcommand{\cPp}{{\cal P}_p(\R^d)}
\begin{document}


\title{Infinite horizon value functions in the Wasserstein spaces}

\author{Ryan Hynd\footnote{Partially supported by NSF grants DMS-1004733 and DMS-1301628.}\;
and Hwa Kil Kim\footnote{The work of H. K. Kim is supported by the
BK21 Plus SNU Mathematical Sciences Division. }}

\maketitle

\begin{abstract}
We perform a systematic study of optimization problems in the Wasserstein spaces that are analogs of
infinite horizon, deterministic control problems.  We derive necessary conditions on action minimizing paths and present a sufficient condition for their existence.
We also verify that the corresponding generalized value functions are a type of viscosity solution of a time independent, Hamilton-Jacobi equation in the space of probability measures.  Finally, we prove a special case of a  conjecture involving the subdifferential of generalized value functions and their relation to action minimizing paths.
\end{abstract}




\section{Introduction}
In this paper, we consider analogs of the classical value functions
\begin{equation}\label{classVal}
u(x)=\inf\left\{\int^\I_0e^{-\delta t}\left(\frac{1}{p}|\dot\gamma(t)|^p -V(\gamma(t)) \right)dt: \gamma\in AC_{p,\delta}(\R^d),\;\gamma(0)=x\right\}
\end{equation}
in the space of probability measures. In the above formula, $x\in \R^d$, $p\in (1,\infty)$, $\delta\in (0,\infty)$, $V\in C^1(\R^d)$, and $AC_{p,\delta}(\R^d)$ is the collection
of locally absolutely continuous paths $\gamma: [0,\infty)\rightarrow \R^d$ satisfying $\int^\I_0e^{-\delta t}|\dot\gamma(t)|^pdt<\infty$.  Under appropriate growth assumptions on $V$,
minimizers of $u(x)$ exist and satisfy the Euler-Lagrange ODE
\begin{equation}\label{EulLagDel}
\frac{d}{dt}\left(|\dot\gamma(t)|^{p-2}\dot\gamma(t)\right) = - \nabla V(\gamma(t))+\delta |\dot\gamma(t)|^{p-2}\dot\gamma(t), \quad t>0.
\end{equation}
\par  Another important fact is that $u$ in \eqref{classVal} can be characterized as a viscosity solution of the Hamilton-Jacobi equation (HJE)
\begin{equation}\label{ClassicalHJE}
\delta u+\frac{1}{q}|\nabla u|^q + V(x)=0, \quad x\in \R^d.
\end{equation}
Here and throughout, $1/q+1/p=1$. Moreover, it can be shown that $u$ is differentiable along minimizing paths and that a necessary and sufficient condition for a path to be minimizing  is
\begin{equation}\label{ClassicalGradFlow}
|\dot\gamma(t)|^{p-2}\dot\gamma(t)=-\nabla u(\gamma(t)), \quad t>0.
\end{equation}
The goal of this work is to extend some of these ideas to the space of probability measures.

\par In our prior study \cite{HyndKim}, we considered value functions on a finite horizon in the Wasserstein spaces. This current work addresses the analogous class of infinite horizon, {\it generalized value functions}
\begin{equation}\label{GenValueFun}
\cU(\mu)=\inf\left\{\int^\I_0e^{-\delta t}\left(\frac{1}{p}||\dot\sigma(t)||^p -\cV(\sigma(t)) \right)dt: \sigma\in AC_{p,\delta}(\cMp),\;\sigma(0)=\mu\right\}.
\end{equation}
In equation \eqref{GenValueFun}, the $p$th Wasserstein space $\cMp$ is
$$
\cPp:=\left\{\text{Borel probability measures $\mu$ on $\R^d$}: \int_{\R^d}|x|^pd\mu(x)<\infty\right\}
$$
equipped with the metric
\begin{equation}\label{WpDef}
W_p(\mu,\nu):=\inf\left\{ \left(\iint_{\R^d\times\R^d}|x-y|^pd\pi(x,y)\right)^{1/p}: \pi\in \Gamma(\mu,\nu)\right\}.
\end{equation}
The infimum in \eqref{WpDef} is taken over $\Gamma(\mu,\nu)$, the subcollection of ${\cal P}_p(\R^d\times\R^d)$ having first marginal $\mu$ and second marginal $\nu$. A wealth of information about $\cMp$ can be found in the references \cite{AGS,V}.

\par In \eqref{GenValueFun}, $t\mapsto ||\dot\sigma(t)||$ is the usual metric derivative of a locally $p$-absolutely
continuous path $\sigma: [0,\infty)\rightarrow \mathcal{M}_p$ (Definition 1.1.1 in \cite{AGS}). By Theorem 8.3.1 \cite{AGS},  there is a Borel vector field $v:\R^d\times[0,\infty)\rightarrow \R^d$ such that the continuity equation holds in the sense of distributions
$$
\partial_t\sigma + \nabla\cdot(\sigma v)=0, \quad \R^d\times[0,\infty).
$$
Moreover,
$$
||v(t)||_{L^p(\sigma(t))}=||\dot\sigma(t)|| \quad \text{a.e. $t>0$}.
$$
The mapping $v$ is known as the {\it minimal velocity} of $\sigma$ as $t\mapsto v(\cdot,t)$ is essentially uniquely determined and satisfies $||v(t)||_{L^p(\sigma(t))}\le ||w(t)||_{L^p(\sigma(t))}$ (for a.e. $t\ge 0$) for any other Borel field $w$ satisfying the continuity equation with $\sigma$.  For this work, an important subset of the space of locally $p$-absolutely continuous paths is $AC_{p,\delta}(\cMp)$ which additionally requires
\begin{equation}\label{vdeltapbound}
\int^\infty_0\int_{\R^d}e^{-\delta t}|v(x,t)|^pd\sigma_t(x)dt<\infty.
\end{equation}

\par We show that minimizing paths for $\cU$ and their corresponding minimal velocities satisfy the {\it Euler-Poisson system}
\begin{equation}\label{StaticEP}
\begin{cases}
\partial_t(\sigma |v|^{p-2}v) + \nabla\cdot (\sigma|v|^{p-2} v\otimes v) = -\sigma \nabla\cV(\sigma)  + \sigma\delta |v|^{p-2}v \\
\hspace{1.28in}\partial_t\sigma +\nabla\cdot(\sigma v)=0
\end{cases}
\end{equation}
in the sense of distributions on $\R^d\times(0,\infty)$.
We also present a result on the existence of minimizing paths provided the potential $\cV$ satisfies an appropriate growth condition and
is continuous with respect to the narrow topology.  To this end, we make specific use of the fact that the narrow topology on $\cPp$ can metrized by the L\'{e}vy-Prokhorov
metric $\Lambda$ (defined in \eqref{LPmetric}).

\begin{thm}\label{WeakExistenceThm}
Assume that $\cV\in C((\cPp,\Lambda))$ and
\begin{equation}\label{weakVBound}
|\cV(\mu)|\le \alpha \Lambda(\mu,\rho)^{2r}+\beta, \quad \mu\in \cMp
\end{equation}
for some $\alpha, \beta\in \R$, $1\le r<p$ and $\rho\in \cMp$. Then $\cU(\mu)$ has a minimizing path for each $\mu\in \cMp$.
\end{thm}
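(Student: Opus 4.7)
The plan is to apply the direct method of the calculus of variations. I would first observe that the L\'{e}vy--Prokhorov metric satisfies $\Lambda(\mu,\nu)\le 1$ on $\cPp\times\cPp$, so the growth assumption \eqref{weakVBound} collapses to a pointwise bound $|\cV(\nu)|\le |\alpha|+|\beta|=:M$ for every $\nu\in\cMp$. Hence $\cU(\mu)\in\R$: the stationary path $\sigma\equiv\mu$ yields $\cU(\mu)\le -\cV(\mu)/\delta$, while $-\cV\ge -M$ gives $\cU(\mu)\ge -M/\delta$. For any minimizing sequence $\{\sigma_n\}\subset AC_{p,\delta}(\cMp)$ with $\sigma_n(0)=\mu$ I obtain the a priori bound
\[ \sup_n\int_0^\infty e^{-\delta t}\|\dot\sigma_n(t)\|^p\,dt\le C. \]

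Next I extract a limit curve. For each $T>0$ the weight satisfies $e^{-\delta t}\ge e^{-\delta T}$ on $[0,T]$, so $\{\|\dot\sigma_n\|\}$ is uniformly bounded in $L^p(0,T)$; H\"{o}lder's inequality then yields $W_p(\sigma_n(s),\sigma_n(t))\le C(T)|t-s|^{1/q}$ for $0\le s\le t\le T$. Consequently the $p$-th moments of $\{\sigma_n(t):t\in[0,T],\,n\ge 1\}$ are uniformly bounded, so the family is narrowly tight, and because $W_p$ dominates $\Lambda$ on bounded-moment sets it is also equicontinuous in the $\Lambda$-metric on $[0,T]$. An Arzel\`{a}--Ascoli argument in $(\cPp,\Lambda)$, followed by a diagonal extraction over $T\nearrow\infty$, produces a subsequence (unrelabeled) and a curve $\sigma:[0,\infty)\to\cPp$ such that $\sigma_n(t)\to\sigma(t)$ narrowly, uniformly on each compact time interval; Fatou applied to $|x|^p$ gives $\sigma(t)\in\cPp$ and $\sigma(0)=\mu$ is preserved.

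To pass to the limit in the functional, narrow continuity of $\cV$, the uniform bound $|\cV(\sigma_n(t))|\le M$, and dominated convergence send $\int_0^\infty e^{-\delta t}\cV(\sigma_n)\,dt$ to $\int_0^\infty e^{-\delta t}\cV(\sigma)\,dt$. For the kinetic term I would invoke narrow lower semicontinuity of $W_p$ on bounded-moment families (cf.\ \cite{AGS,V}) to obtain $W_p(\sigma(s),\sigma(t))\le\liminf_n W_p(\sigma_n(s),\sigma_n(t))$, extract a weak $L^p_{\mathrm{loc}}(0,\infty)$ limit $g$ of a further subsequence of $\|\dot\sigma_n\|$, and deduce $\|\dot\sigma\|\le g$ a.e. Weak lower semicontinuity of the $L^p$ norm against the weight $e^{-\delta t}$, combined with monotone convergence in $T$, then gives
\[ \int_0^\infty e^{-\delta t}\|\dot\sigma\|^p\,dt\le \liminf_n\int_0^\infty e^{-\delta t}\|\dot\sigma_n\|^p\,dt. \]
In particular $\sigma\in AC_{p,\delta}(\cMp)$, and combining the two semicontinuity statements yields that $\sigma$ attains $\cU(\mu)$.

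The principal obstacle will be precisely this last semicontinuity step, since compactness of the minimizing sequence is only available in the coarser narrow topology while the kinetic cost is defined through the stronger $W_p$ distance. The interplay between narrow lower semicontinuity of $W_p$, weak $L^p$ compactness of the metric derivatives, and the pointwise identification $\|\dot\sigma\|\le g$ is where the real technical work sits; the infinite horizon itself contributes nothing essential beyond the exponential discount, which powers both the diagonal extraction and the monotone convergence step.
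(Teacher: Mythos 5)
Your proposal is correct and follows essentially the same direct-method skeleton as the paper: a priori bound on the discounted kinetic energy of a minimizing sequence, local-in-time Arzel\`{a}--Ascoli compactness in the narrow (L\'{e}vy--Prokhorov) topology with a diagonal extraction, weak $L^p$ compactness of the metric derivatives together with narrow lower semicontinuity of $W_p$ to identify $\|\dot\sigma\|\le g$ and pass to the liminf in the kinetic term, and dominated convergence with the narrow continuity of $\cV$ for the potential term; this is exactly the content of the paper's Lemma \ref{BlowupLem} plus the short argument that follows it. The one genuine (and worthwhile) difference is how you obtain coercivity: you observe that $\Lambda\le 1$, so \eqref{weakVBound} is simply a uniform bound on $\cV$, and the kinetic estimate on the minimizing sequence drops out immediately for every $\delta>0$. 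The paper instead manipulates \eqref{ExistenceINeq} ``as in Lemma \ref{BoundsLem}'' and thereby invokes the smallness condition \eqref{cond18}, which does not appear among the hypotheses of the theorem; your route shows that no such restriction on $\delta$ (or $\alpha$) is needed for this existence statement, at the price of making the exponent $2r$ in \eqref{weakVBound} essentially irrelevant, whereas in the paper that exponent also feeds into part (iii) of Lemma \ref{BlowupLem}.
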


\par As in our previous paper \cite{HyndKim}, the main result of this work is that each generalized value functions is a type of viscosity solution of an appropriate Hamilton-Jacobi equation.
\begin{thm}\label{mainThm}
Assume
\begin{equation}\label{Vgrowth}
\cV(\mu)\le \alpha W_p(\mu,\varrho)^p+\beta,\quad \mu\in \cMp
\end{equation}
for some measure $\varrho\in \cMp$, $\alpha, \beta\in \R$.
 Then there is $\delta_0=\delta_0(p,\alpha)>0$ such that functional $\cU$ \eqref{GenValueFun} is a viscosity solution (Definition \ref{ViscDef}) of the HJE
\begin{equation}\label{NewHJE}
\delta \cU+\frac{1}{q}||\nabla\cU||^q_{L^q(\mu)} + \cV(\mu)=0, \quad \mu\in \cMp
\end{equation}
provided $\delta>\delta_0$.
\end{thm}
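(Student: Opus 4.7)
The plan is to follow the standard template for verifying viscosity solutions of Hamilton--Jacobi equations, adapted to the Wasserstein setting as in \cite{HyndKim}: first establish a priori bounds on $\cU$, next derive a dynamic programming principle (DPP), and finally insert short constant-velocity and $\varepsilon$-minimizing paths into the DPP at extrema of $\cU-\varphi$ to produce the sub- and super-solution inequalities of Definition \ref{ViscDef}. The novelty compared to the finite-horizon case is that $\delta_0$ is fixed by the need to control the infinite-horizon tail.

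Step 1 (\emph{a priori bounds}). The constant path $\sigma(t)\equiv\mu$ combined with \eqref{Vgrowth} yields $\cU(\mu)\le -\cV(\mu)/\delta\le C(1+W_p(\mu,\varrho)^p)$. For the lower bound I use the triangle inequality $W_p(\sigma(t),\varrho)\le W_p(\mu,\varrho)+\int_0^t\|\dot\sigma(s)\|\,ds$ together with H\"older's inequality to bound
$$\int_0^\infty e^{-\delta t}W_p(\sigma(t),\varrho)^p\,dt\le C(\delta,p)\Bigl[W_p(\mu,\varrho)^p+\int_0^\infty e^{-\delta t}\|\dot\sigma(t)\|^p\,dt\Bigr],$$
provided $\delta$ exceeds a threshold $\delta_0(p,\alpha)$ at which the Gr\"onwall-type constant remains finite. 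Combined with \eqref{Vgrowth} this gives simultaneously $\cU(\mu)>-\infty$ and the two-sided estimate $|\cU(\mu)|\le C(1+W_p(\mu,\varrho)^p)$.

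Step 2 (\emph{DPP}). Standard concatenation and restriction of paths in $AC_{p,\delta}(\cMp)$, together with the factorization $e^{-\delta(s+t)}=e^{-\delta s}e^{-\delta t}$, gives for each $T>0$
$$\cU(\mu)=\inf\left\{\int_0^T e^{-\delta t}\Bigl(\tfrac{1}{p}\|\dot\sigma(t)\|^p-\cV(\sigma(t))\Bigr)dt+e^{-\delta T}\cU(\sigma(T))\right\},$$
where the infimum runs over $\sigma\in AC_{p,\delta}(\cMp)$ with $\sigma(0)=\mu$. The bound from Step 1 makes $e^{-\delta T}\cU(\sigma(T))$ finite along admissible paths and ensures that pasting an $\varepsilon$-optimal infinite tail originating from $\sigma(T)$ onto a front segment yields a path in $AC_{p,\delta}(\cMp)$. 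Step 3 (\emph{viscosity verification}): for the subsolution inequality, let $\varphi$ be a test function with $\cU-\varphi$ attaining a local max at $\mu_0$, normalized so $\cU(\mu_0)=\varphi(\mu_0)$. Given any $w\in L^p(\mu_0;\R^d)$, construct a short admissible path $\sigma$ with $\sigma(0)=\mu_0$ and minimal velocity $v(0)=w$, e.g.\ by pushing $\mu_0$ forward along $x\mapsto x+tw(x)$. Substituting into the DPP, dividing by $T$, letting $T\downarrow 0$, and invoking the Wasserstein chain rule $\frac{d}{dt}\varphi(\sigma(t))|_{t=0}=\int\nabla\varphi(\mu_0)\cdot w\,d\mu_0$ gives
$$\delta\varphi(\mu_0)+\cV(\mu_0)\le \tfrac{1}{p}\int|w|^p\,d\mu_0+\int\nabla\varphi(\mu_0)\cdot w\,d\mu_0.$$
Minimizing the right-hand side in $w$ yields the optimizer $w=-|\nabla\varphi(\mu_0)|^{q-2}\nabla\varphi(\mu_0)$ and hence the subsolution inequality $\delta\varphi(\mu_0)+\tfrac{1}{q}\|\nabla\varphi(\mu_0)\|^q_{L^q(\mu_0)}+\cV(\mu_0)\le 0$. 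The supersolution inequality at a local min is obtained symmetrically, but one must insert $T$-dependent $\varepsilon_T$-minimizers in the DPP and extract a limiting initial velocity, using the Step 1 bound to prevent the near-optimal kinetic energies from blowing up.

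The main obstacle is Step 1: selecting $\delta_0(p,\alpha)$ so that the Gr\"onwall estimate closes uniformly in $\mu$ and admissible $\sigma$. The one-sided growth in \eqref{Vgrowth} prevents $\cU\equiv -\infty$ only once the discount dominates the $p$-th power growth of $\cV$, and this is precisely what determines $\delta_0$. Once this uniform control is in place, Steps 2 and 3 are essentially routine adaptations of the finite-horizon arguments in \cite{HyndKim}; the one point demanding care in Step 3 is verifying that the short paths built from $w$ are truly admissible for the DPP and that the $T\downarrow 0$ limit of the discounted integrals matches the infinitesimal Lagrangian, both of which follow from the narrow/absolute-continuity structure of $AC_{p,\delta}(\cMp)$.
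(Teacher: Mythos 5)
Your Steps 1 and 2 are fine and match the paper (they are Lemma \ref{BoundsLem}, proved via the weighted Poincar\'e inequality, and Proposition \ref{DPPprof}). The gap is in Step 3: you verify a test-function formulation of viscosity solutions, but Theorem \ref{mainThm} asserts the property in the sense of Definition \ref{ViscDef}, which is stated directly for arbitrary elements $\xi\in\nabla^{+}\cU(\mu_0)$ and $\xi\in\nabla^{-}\cU(\mu_0)$, where the sub/superdifferentials (Definition \ref{DiffDef}) are defined through optimal plans $\Gamma_0(\mu_0,\mu)$. Your argument never produces the inequality for such a $\xi$; passing from ``$\cU-\varphi$ has a local max and a chain rule holds for $\varphi$'' to the statement for every superdifferential element would itself require a nontrivial equivalence (and a precise class of test functions together with a justified chain rule on $\cMp$), none of which is supplied. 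Even within your own subsolution computation there is a hidden optimality issue: to exploit the (super)differential along the short path $(\id+tw)_{\#}\mu_0$ one needs $(\id\times(\id+tw))_{\#}\mu_0$ to be an optimal plan for small $t$, which is why the paper restricts to velocities of the form $v=\lambda(r-\id)$ with $r$ optimal and then uses the density characterization \eqref{TanSpaceId} of $\mathrm{Tan}_{\mu_0}\cMp$ before taking the supremum; for a generic $w\in L^p(\mu_0;\R^d)$ this step fails.

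The supersolution half is where the real work lies, and your sketch replaces it with a step that would not go through. Proposing to ``insert $\varepsilon_T$-minimizers and extract a limiting initial velocity'' has no supporting compactness: an $\varepsilon h$-optimal path need not have any well-defined initial velocity in the limit, and none is needed. The paper's mechanism is different: from the a priori kinetic bound one first gets the crude estimate $W_p(\sigma_h(h),\mu_0)\le Ch^{1-1/p}$ \eqref{Badhest}; then, feeding the subdifferential inequality for $\xi$ into the DPP and applying Young's inequality, one bootstraps this to the sharper rate $W_p(\sigma_h(h),\mu_0)\le Ch$ \eqref{Betterhest}; only with this improved rate does a second application of Young's inequality yield $\delta\cU(\mu_0)+\frac{1}{q}\|\xi\|^q_{L^q(\mu_0)}+\cV(\mu_0)\ge -\eta+o(1)$, after which $h\to0^{+}$ and $\eta\to0^{+}$ conclude. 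Without this two-stage estimate (or an equivalent device), the discounted kinetic term and the pairing term in the subdifferential inequality cannot be balanced, so as written your Step 3 does not close.
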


\par An important example occurs when $\cV$ is a {\it simple potential}:
\begin{equation}\label{SimpleV}
\cV(\mu)=\int_{\R^d}V(x)d\mu(x)
\end{equation}
for some $V\in C^1(\R^d)$.  We note that if $V$ is not uniformly bounded, $\cV$ will not in general be narrowly continuous (Remark 7.1.11 in \cite{AGS}). So it is not immediate that the
corresponding generalized value function $\cU$ will have minimizing paths. However, we show that this is generally the case and provide a formula for the generalized value function \eqref{GenValueFun} in terms of the
classical one \eqref{classVal}.

\begin{prop}\label{SpecForm}
Assume $\cV$ is a simple potential and that $V(x)=O(|x|^r)$ as $|x|\rightarrow \infty$ for some $1\le r<p$. Then
\begin{equation}\label{SpecValueFun}
\cU(\mu)=\int_{\R^d}u(x)d\mu(x), \quad \mu \in \cMp.
\end{equation}
Moreover, there is also a Borel map $\Psi: \R^d\times [0,\infty)\rightarrow \R^d$ such that for each
$x\in \R^d$, $t\mapsto \Psi(x,t)$ is a minimizer for $u(x)$, and for each $\mu\in\cMp$,
$$
\sigma(t):= \Psi(t)_\#\mu, \quad t\in [0,\infty)
$$
is a minimizing path for $\cU(\mu)$.
\end{prop}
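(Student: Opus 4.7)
The plan is to prove the equality $\cU(\mu)=\int u\,d\mu$ by verifying the two opposite inequalities separately, producing the Borel map $\Psi$ along the way as a by-product of the upper bound.

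For the upper bound $\cU(\mu)\le\int u\,d\mu$, I would first apply a measurable selection theorem (Kuratowski-Ryll-Nardzewski) to the set-valued map $x\mapsto\{\gamma:\gamma\text{ minimizes }u(x)\}$, viewed as a multifunction from $\R^d$ into the Polish space $C([0,\I);\R^d)$ equipped with the compact-open topology. This produces a Borel $\Psi:\R^d\times[0,\I)\to\R^d$ with $\Psi(x,0)=x$ and $t\mapsto\Psi(x,t)$ a minimizer of $u(x)$. Setting $\sigma(t):=\Psi(t)_\#\mu$ and defining a velocity field $v$ by $v(\Psi(x,t),t):=\partial_t\Psi(x,t)$, the pair $(\sigma,v)$ solves the continuity equation, so $||\dot\sigma(t)||^p\le\int|\partial_t\Psi(x,t)|^p d\mu(x)$. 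Since push-forward also gives $\cV(\sigma(t))=\int V(\Psi(x,t))d\mu(x)$, Fubini converts the action of $\sigma$ directly into $\int u(x)d\mu(x)$, and simultaneously confirms $\sigma\in AC_{p,\delta}(\cMp)$ under the growth hypothesis $r<p$.

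For the reverse inequality, given any $\sigma\in AC_{p,\delta}(\cMp)$ with $\sigma(0)=\mu$, I would invoke the superposition principle (Theorem 8.2.1 of \cite{AGS}) to obtain a probability measure $\eta$ on $C([0,\I);\R^d)$ with $(e_t)_\#\eta=\sigma(t)$ and $\dot\gamma(t)=v(\gamma(t),t)$ for $\eta$-a.e.\ $\gamma$, where $v$ is the minimal velocity of $\sigma$. This choice forces the identity $||\dot\sigma(t)||^p=\int|\dot\gamma(t)|^p d\eta(\gamma)$, while push-forward yields $\cV(\sigma(t))=\int V(\gamma(t))d\eta(\gamma)$. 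Disintegrating $\eta=\int\eta_x\,d\mu(x)$ over the starting point, Fubini expresses the action of $\sigma$ as the $\mu$-integral over $x$ of an $\eta_x$-average of the classical action for curves based at $x$; since each such curve is a competitor for $u(x)$, the inner integrand dominates $u(x)$, and the lower bound $\int u\,d\mu$ follows on taking the infimum over $\sigma$.

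I expect the main obstacle to be the measurable selection step producing $\Psi$. One must verify that the minimizer-valued multifunction has nonempty closed values with Borel graph, and that the scalar $u$ itself is Borel so that $\int u\,d\mu$ is well defined; the latter should follow from continuity of $u$ as a viscosity solution of \eqref{ClassicalHJE}. Pointwise existence of minimizers is classical via the direct method together with the coercivity bound $\int_0^\I e^{-\delta t}|\dot\gamma(t)|^p dt\le C(1+|x|^p)$ valid when $r<p$, and joint lower semicontinuity of the discounted action in $(x,\gamma)$ yields the closed-graph property. A subsidiary concern is controlling absolute convergence for the Fubini interchanges, but the subcritical growth $r<p$ again absorbs the potential into the finite discounted $p$-kinetic energy on both sides of the argument.
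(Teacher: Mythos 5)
Your proposal is correct and follows essentially the same route as the paper: the upper bound comes from a measurable selection of minimizers of $u(x)$ (the paper applies Theorem 8.3.1 of Aubin--Frankowska on the Polish space $(AC_{p,\delta}(\R^d),\Sigma)$, using Lemma \ref{ClassCompactness} for nonemptiness, closedness and lower semicontinuity of the distance function) yielding the Borel flow $\Psi$ and the competitor $\sigma(t)=\Psi(t)_\#\mu$, while the lower bound uses a superposition/probabilistic representation of admissible paths plus Fubini, exactly as in parts 2 and 1 of the paper's proof. The only differences are minor: the paper cannot simply cite Theorem 8.2.1 of \cite{AGS} (which is a finite-horizon statement) and instead proves its own infinite-horizon version (Theorem \ref{ProbRep}, with Remark \ref{FubiniRemark} supplying the integrability needed for the Fubini interchange), and it obtains $||\dot\sigma(t)||^p\le\int_{\R^d}|\partial_t\Psi(x,t)|^p\,d\mu(x)$ from a $W_p$ difference-quotient estimate rather than from a velocity field defined by $v(\Psi(x,t),t):=\partial_t\Psi(x,t)$, which would require care where $\Psi(\cdot,t)$ fails to be injective.
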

A corollary of the above proposition provides a connection between the classical HJE \eqref{ClassicalHJE} and the Euler-Poisson system \eqref{StaticEP}. We show
that if $\sigma$ is any minimizing trajectory for $\cU(\mu)$ with minimal velocity $v$, then
$$
|v(x,t)|^{p-2}v(x,t)=-\nabla u(x), \quad \sigma(t)\;\;\text{a.e.} \; x\in \R^d
$$
for Lebesgue almost every $t>0$. This identity can be seen at a heuristic level by differentiating the classical HJE \eqref{ClassicalHJE} and comparing the result to the Euler-Possion system \eqref{StaticEP}, when
$\cV$ satisfies \eqref{SimpleV}.

\par There is a growing literature on value functions in the space of probability measures \cite{Feng, G, G2, GS, HyndKim} and more generally in metric spaces \cite{AF, Brasco, Crandall, Goz, Ober}. However, the bulk of these efforts have been restricted to optimization problems on a {\it finite time horizon}; as mentioned, we consider model infinite horizon problems in this paper.  Regarding the Wasserstein space, the main technical differences with our work on finite horizon required a nonstandard, weighted Poincar\'{e} inequality (Lemma \ref{1DPoincare}) and a new probabilistic representation of $AC_{p,\delta}(\cMp)$ paths (Theorem \ref{ProbRep}).

\par One aspiration we have is to further develop the theory of action minimizing paths. We conjecture that a necessary and sufficient condition for minimizing trajectories is that they satisfy a gradient flow condition analogous to \eqref{ClassicalGradFlow}; see Conjecture \ref{GradConj}. Unfortunately, we are only able to verify a particular case in \eqref{WeakConj}.  Another open problem is to verify whether  generalized value functions are unique as viscosity solutions of the HJE \eqref{NewHJE}. These are both areas of ongoing research.


\par This paper is organized as follows.  In section \ref{PropSec}, we deduce several important properties of generalized
value functions including various continuity assertions. In section \ref{MinSec}, we study minimizers of $\cU$, derive the Euler-Poisson system and verify Theorem \ref{WeakExistenceThm}.  We study simple potentials and prove Proposition \ref{SpecForm} in section \ref{SimpleSec}; and in section \ref{HJESec}, we verify Theorem \ref{mainThm} which asserts the viscosity solution property of $\cU$.  We thank the Universitat Polit\`{e}cnica de Catalunya, the Math Science Research Institute and Nathan and Angela George for their hospitality during the writing of this paper.



\section{Various properties}\label{PropSec}
In this section, we will deduce various characteristics of generalized value functions $\cU$ \eqref{GenValueFun}. In particular, we will show that under the appropriate hypotheses that $\cU$ obeys a dynamic programming principle and is continuous. First, we will start by showing that $\cU$ is well defined and satisfies simple pointwise bounds provided $\delta$ is large enough. To this end, we will need a type of weighted Poincar\'{e} inequality.
\begin{lem}
Assume $p\in [1,\infty)$. Then
\begin{equation}\label{1DPoincare}
\left(\int^\I_0e^{-\delta t}|u(t)-u(0)|^pdt\right)^{1/p}\le \frac{p}{\delta}\left(\int^\I_0e^{-\delta t}|\dot u(t)|^pdt\right)^{1/p}
\end{equation}
for all $\delta>0$ and $u\in AC_{\text{loc}}([0,\infty); \R)$.
\end{lem}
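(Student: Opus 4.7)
The plan is to reduce to the case $v(t) := u(t) - u(0)$ (so $v(0) = 0$ and $\dot v = \dot u$), and then close the estimate via integration by parts against the weight $e^{-\delta t}$ followed by H\"older. We may assume the right-hand side is finite, since otherwise the inequality is trivial. For $p > 1$, on any bounded interval $[0,T]$, absolute continuity of $|v|^p$ and the product rule give
\[
\int_0^T e^{-\delta t}|v(t)|^p\,dt = -\frac{1}{\delta}e^{-\delta T}|v(T)|^p + \frac{p}{\delta}\int_0^T e^{-\delta t}|v(t)|^{p-2}v(t)\dot v(t)\,dt,
\]
where the boundary term at $0$ drops out by $v(0) = 0$. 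Discarding the nonpositive boundary term at $T$ and splitting the weight as $e^{-\delta t} = e^{-\delta t(p-1)/p}\cdot e^{-\delta t/p}$, H\"older's inequality with conjugate exponents $p/(p-1)$ and $p$ yields
\[
\int_0^T e^{-\delta t}|v|^p\,dt \;\le\; \frac{p}{\delta}\left(\int_0^T e^{-\delta t}|v|^p\,dt\right)^{(p-1)/p}\left(\int_0^T e^{-\delta t}|\dot v|^p\,dt\right)^{1/p}.
\]

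Because $v$ is bounded on $[0,T]$, the left-hand side is finite, so I divide both sides by its $(p-1)/p$ power and enlarge the $|\dot v|^p$-integral to all of $[0,\infty)$ to obtain the desired bound with $T$ in place of $\infty$ on the left. Monotone convergence as $T \to \infty$ then gives the result for $p > 1$. The case $p = 1$ is immediate from Fubini applied to $|v(t)| \le \int_0^t |\dot v(s)|\,ds$, which gives the sharper constant $1/\delta = p/\delta$.

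The main thing to be careful about is the boundary term at infinity: rather than try to prove a priori decay of $e^{-\delta T}|v(T)|^p$, working first on the finite interval $[0,T]$ and passing to the limit sidesteps the issue entirely, since the boundary term appears on the correct (negative) side of the identity and can simply be dropped.
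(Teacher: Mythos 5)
Your proposal is correct and follows essentially the same route as the paper's proof: integration by parts against the weight $e^{-\delta t}$, H\"older with exponents $p$ and $p/(p-1)$, discarding the nonpositive boundary term, dividing by the $(p-1)/p$ power of the left-hand side, and letting $T\rightarrow\infty$, with the $p=1$ case handled separately by the elementary bound $|u(t)-u(0)|\le\int_0^t|\dot u(s)|\,ds$. Your explicit care with finiteness on $[0,T]$ before dividing is a minor tidying of the same argument, not a different method.
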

\begin{proof}  Assume $u(0)=0$, $p\in (1,\infty)$ and fix $T>0$. Integrating by parts and using H\"{o}lder's inequality gives
\begin{align*}
\int^T_0e^{-\delta t}|u(t)|^pdt & =  \left.-\frac{1}{\delta}|u(t)|^pe^{-\delta t}\right|^T_0 + \frac{p}{\delta}\int^T_0 |u(t)|^{p-2}u(t)\dot u(t) e^{-\delta t}dt\\
&= -\frac{1}{\delta}|u(T)|^pe^{-\delta T} + \frac{p}{\delta}\int^T_0 |u(t)|^{p-2}u(t)e^{-\frac{\delta}{q} t}\;\dot u(t) e^{-\frac{\delta}{p} t}dt \\
&\le  \frac{p}{\delta} \left(\int^T_0e^{-\delta t}|u(t)|^pdt\right)^{1-1/p} \left(\int^T_0e^{-\delta t}|\dot u(t)|^pdt\right)^{1/p}.
\end{align*}
Thus, \eqref{1DPoincare} holds by sending  $T\rightarrow \infty$. Likewise, for $p=1$
\begin{align*}
\int^T_0e^{-\delta t}|u(t)-u(0)|dt &\le \int^T_0e^{-\delta t}\left(\int^t_0|\dot u(s)|ds\right)dt \\
&= \left.-\frac{1}{\delta}\left(\int^t_0|\dot u(s)|ds\right)e^{-\delta t}\right|^T_0 + \frac{1}{\delta}\int^T_0 e^{-\delta t}|\dot u(t)|dt\\
&\le \frac{1}{\delta}\int^T_0 e^{-\delta t}|\dot u(t)|dt.
\end{align*}
Again we conclude by sending $T\rightarrow \infty$.
\end{proof}
An immediate corollary of Lemma \ref{1DPoincare} is as follows.

\begin{cor}
Assume $p\in [1,\infty)$. Then
\begin{equation}\label{PoincareNewWeight}
\left(\int^\I_0e^{-\delta t}W_p(\sigma(t),\sigma(0))^pdt\right)^{1/p}\le \frac{p}{\delta}\left(\int^\I_0e^{-\delta t}||\dot\sigma(t)||^pdt\right)^{1/p}
\end{equation}
for each locally p-absolutely continuous path $\sigma:[0,\infty)\rightarrow \cMp$.
\end{cor}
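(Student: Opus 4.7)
The plan is to reduce the Wasserstein inequality directly to the scalar Poincar\'e inequality \eqref{1DPoincare} by extracting an appropriate real valued function from the path $\sigma$. The key observation is that the metric derivative of a locally $p$-absolutely continuous path controls distances along the path: for every $0\le s\le t$ we have
$$
W_p(\sigma(s),\sigma(t))\le \int_s^t ||\dot\sigma(r)||\,dr,
$$
which is the defining property of $||\dot\sigma||$ (see Theorem 1.1.2 of \cite{AGS}). This suggests introducing the auxiliary scalar function that records the cumulative speed of $\sigma$.

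First I would define $u(t):=\int_0^t ||\dot\sigma(r)||\,dr$ for $t\ge 0$. Since $\sigma$ is locally $p$-absolutely continuous, $r\mapsto ||\dot\sigma(r)||$ lies in $L^p_{\text{loc}}([0,\infty))$ and is in particular locally integrable, so $u\in AC_{\text{loc}}([0,\infty);\R)$, $u(0)=0$, and $\dot u(t)=||\dot\sigma(t)||$ for almost every $t>0$. The bound recalled above then reads $W_p(\sigma(t),\sigma(0))\le u(t)=|u(t)-u(0)|$ for every $t\ge 0$.

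Next I would raise this pointwise estimate to the $p$-th power, multiply by $e^{-\delta t}$, and integrate, which yields
$$
\int_0^\I e^{-\delta t}W_p(\sigma(t),\sigma(0))^p\,dt\le \int_0^\I e^{-\delta t}|u(t)-u(0)|^p\,dt.
$$
Applying Lemma \ref{1DPoincare} to $u$ bounds the right hand side by $(p/\delta)^p\int_0^\I e^{-\delta t}||\dot\sigma(t)||^p\,dt$, and taking $p$-th roots gives \eqref{PoincareNewWeight}.

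I do not anticipate any real obstacle: the content of the corollary is essentially the statement that the map $\sigma\mapsto u$ transports the scalar weighted Poincar\'e inequality to the Wasserstein setting, and once the bound $W_p(\sigma(t),\sigma(0))\le u(t)$ is in hand the argument is two lines. The only minor point to confirm is the local absolute continuity of $u$, which follows immediately from the $L^p_{\text{loc}}$ regularity of $||\dot\sigma||$.
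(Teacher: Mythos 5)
Your proposal is correct and takes essentially the same route as the paper: reduce \eqref{PoincareNewWeight} to the scalar weighted Poincar\'e inequality \eqref{1DPoincare} by producing an auxiliary function in $AC_{\text{loc}}([0,\infty);\R)$ whose derivative is controlled by $||\dot\sigma(t)||$ and which dominates $W_p(\sigma(t),\sigma(0))$. The only (immaterial) difference is your choice of auxiliary function: the paper uses $u(t)=W_p(\sigma(t),\sigma(0))$ together with the triangle inequality to get $|\dot u(t)|\le ||\dot\sigma(t)||$, whereas you use the cumulative speed $u(t)=\int_0^t||\dot\sigma(r)||\,dr$ and the defining property of the metric derivative.
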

\begin{proof}
Define the function $u(t):=W_p(\sigma(t),u(0))$, for $t\in [0,\infty)$. The triangle inequality implies $|u(t_1)-u(t_2)|\le W_p(\sigma(t_1), \sigma(t_2))$; in
particular,  $u\in AC_{\text{loc}}([0,\infty); \R)$, and $|\dot u(t)|\le ||\dot\sigma(t)||$.  Repeating the proof of \eqref{1DPoincare}, we easily conclude \eqref{PoincareNewWeight}.
\end{proof}
\begin{lem}\label{BoundsLem}
Assume the growth condition \eqref{Vgrowth} and
\begin{equation}\label{cond18}
p(2p/\delta)^p\alpha <1.
\end{equation}
Then
\begin{equation}\label{BoundsU}
-\frac{1}{\delta}\left(\beta + 2^p\alpha W_p(\mu,\varrho)^p\right)\le \cU(\mu)\le -\frac{1}{\delta}\cV(\mu).
\end{equation}
In particular, $\cU(\mu)>-\infty$ for all large values of $\delta$.
\end{lem}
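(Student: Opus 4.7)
My plan is to prove the two inequalities in \eqref{BoundsU} separately. The upper bound is immediate from a constant-path ansatz, while the lower bound combines the growth hypothesis \eqref{Vgrowth} with the weighted Poincar\'{e} estimate \eqref{PoincareNewWeight}; hypothesis \eqref{cond18} is precisely what makes the requisite absorption work.

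For the upper bound, I would test the infimum in \eqref{GenValueFun} against the stationary trajectory $\sigma(t)\equiv\mu$. Its metric derivative is identically zero, so $\sigma\in AC_{p,\delta}(\cMp)$ trivially, and the action integrates to $-\cV(\mu)/\delta$, giving $\cU(\mu)\le -\cV(\mu)/\delta$ by definition of the infimum.

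For the lower bound, take any admissible $\sigma$ with $\sigma(0)=\mu$ and estimate its action from below in three steps. First, \eqref{Vgrowth} yields $-\cV(\sigma(t))\ge -\alpha W_p(\sigma(t),\varrho)^p-\beta$. Second, the triangle inequality together with the elementary bound $(a+b)^p\le 2^p(a^p+b^p)$ splits $W_p(\sigma(t),\varrho)^p$ into a $W_p(\sigma(t),\mu)^p$ piece plus a constant-in-$t$ piece. Third, \eqref{PoincareNewWeight} controls the integrated $W_p(\sigma(t),\mu)^p$ against the kinetic integral $\int_0^\infty e^{-\delta t}||\dot\sigma(t)||^p\,dt$. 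After combining with the kinetic term already in the action in \eqref{GenValueFun}, the coefficient of the kinetic integral becomes $\tfrac{1}{p}-\alpha 2^p(p/\delta)^p$, which is strictly positive precisely under \eqref{cond18}; discarding this nonnegative contribution leaves $-\tfrac{1}{\delta}(\beta+2^p\alpha W_p(\mu,\varrho)^p)$ as a lower bound, uniformly in $\sigma$. The final assertion $\cU(\mu)>-\infty$ for large $\delta$ then follows, since \eqref{cond18} is eventually satisfied as $\delta\to\infty$.

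The only thing to watch is the bookkeeping of constants: the Poincar\'{e} factor $p/\delta$ from \eqref{PoincareNewWeight} must be small enough relative to the $1/p$ coefficient of the kinetic term in \eqref{GenValueFun} for the absorption to succeed, and this is exactly why the threshold in \eqref{cond18} takes the form $p(2p/\delta)^p\alpha<1$. There is no genuine obstacle beyond lining up these constants.
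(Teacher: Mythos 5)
Your proposal is correct and follows essentially the same route as the paper: the upper bound via the constant path $\sigma(t)\equiv\mu$, and the lower bound by combining \eqref{Vgrowth}, the splitting $W_p(\sigma(t),\varrho)^p\le 2^p\bigl(W_p(\sigma(t),\mu)^p+W_p(\mu,\varrho)^p\bigr)$, and the weighted Poincar\'{e} inequality \eqref{PoincareNewWeight}, so that the kinetic term is absorbed with coefficient $\tfrac{1}{p}-\alpha(2p/\delta)^p>0$ under \eqref{cond18}. Your constant bookkeeping matches the paper's exactly, so there is nothing to add.
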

\begin{proof}
Let $\mu\in \cMp$ and $\sigma$ be admissible for $\cU(\mu)$. Using the growth assumption on $\cV$ and the weighted Poincare inequality
\begin{align*}
\int^\infty_0e^{-\delta t}\left(\frac{1}{p}||\dot\sigma(t)||^p -\cV(\sigma(t)) \right)dt & \ge \int^s_0e^{-\delta t}\left(\frac{1}{p}||\dot\sigma(t)||^p - (\beta +\alpha W_p(\rho,\sigma(t))^p) \right)dt\\
& \ge-\frac{1}{\delta}\left(\beta + 2^p\alpha W_p(\mu,\varrho)^p\right) + \\
&\quad \left(\frac{1}{p} - \alpha \left(\frac{2p}{\delta}\right)^p\right)\int^\infty_0e^{-\delta t}||\dot\sigma(t)||^pdt  \\
&\ge -\frac{1}{\delta}\left(\beta + 2^p\alpha W_p(\mu,\varrho)^p\right).
\end{align*}
The lower bound in \eqref{BoundsU} is now immediate.  Choosing $\sigma(t)=\mu$ for all $t$ in \eqref{GenValueFun} gives the upper bound in \eqref{BoundsU}.
\end{proof}
Next we derive the all important {\it Dynamic Programming Principle}. The proof here is not so different from well known arguments used to prove dynamic programming for \eqref{classVal} (see Lemma 7.1 of \cite{FS}), but we include it for
completeness. We also remark that the HJE \eqref{NewHJE} is an infinitesimal version of the dynamic programming principle \eqref{DPP}.
\begin{prop}\label{DPPprof}
For each $T\in [0,\infty)$ and $\mu\in\cMp$,
\begin{equation}\label{DPP}
U(\mu)=\inf\left\{e^{-\delta T}U(\sigma(T)) + \int^T_0e^{-\delta t}\left(\frac{1}{p}||\dot\sigma(t)||^p -\cV(\sigma(t)) \right)dt\right\}
\end{equation}
where the infimum is taken over paths  $\sigma\in AC_{p,\delta}(\cMp)$ with $\sigma(0)=\mu$.
\end{prop}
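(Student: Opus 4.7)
The plan is to prove the two inequalities $\cU(\mu)\leq\inf\{\cdots\}$ and $\cU(\mu)\geq\inf\{\cdots\}$ separately by the standard ``cut-and-paste'' plus ``restrict-and-shift'' arguments, carefully keeping track of the discount factor $e^{-\delta t}$.

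For the upper bound $\cU(\mu)\leq\inf\{\cdots\}$, I would fix any $\sigma\in AC_{p,\delta}(\cMp)$ with $\sigma(0)=\mu$, any $\varepsilon>0$, and choose $\tau\in AC_{p,\delta}(\cMp)$ with $\tau(0)=\sigma(T)$ satisfying
\[
\int_0^\infty e^{-\delta s}\left(\tfrac{1}{p}\|\dot\tau(s)\|^p-\cV(\tau(s))\right)ds < \cU(\sigma(T))+\varepsilon,
\]
which is possible provided $\cU(\sigma(T))$ is finite (guaranteed for $\delta$ large by Lemma \ref{BoundsLem}; otherwise the bound is trivial). Then I would concatenate them as
\[
\tilde\sigma(t)=\begin{cases}\sigma(t),& 0\leq t\leq T,\\ \tau(t-T),& t>T,\end{cases}
\]
verify that $\tilde\sigma\in AC_{p,\delta}(\cMp)$ with $\tilde\sigma(0)=\mu$ (the metric derivative is just the concatenation of the pieces since the paths match at $T$, and the weighted $L^p$ bound on $\|\dot{\tilde\sigma}\|$ splits into two finite pieces), and compute using the substitution $t=T+s$:
\[
\int_0^\infty e^{-\delta t}\left(\tfrac{1}{p}\|\dot{\tilde\sigma}\|^p-\cV(\tilde\sigma)\right)dt = \int_0^T e^{-\delta t}(\cdots)dt+e^{-\delta T}\int_0^\infty e^{-\delta s}(\cdots)ds.
\]
The first term is the running cost along $\sigma|_{[0,T]}$, the second is bounded by $e^{-\delta T}(\cU(\sigma(T))+\varepsilon)$, so taking $\varepsilon\to 0$ and then the infimum over $\sigma$ gives the desired upper bound.

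For the lower bound $\cU(\mu)\geq\inf\{\cdots\}$, I would take any admissible $\sigma$ for $\cU(\mu)$ and set $\tau(s):=\sigma(T+s)$. A change of variables shows that $\tau\in AC_{p,\delta}(\cMp)$ with $\tau(0)=\sigma(T)$; in particular
\[
\int_0^\infty e^{-\delta s}\|\dot\tau(s)\|^p\,ds = e^{\delta T}\int_T^\infty e^{-\delta t}\|\dot\sigma(t)\|^p\,dt < \infty.
\]
Splitting the integral defining $\cU(\mu)$ at $T$ and using the same substitution gives
\[
\int_0^\infty e^{-\delta t}(\cdots)\,dt = \int_0^T e^{-\delta t}(\cdots)\,dt + e^{-\delta T}\int_0^\infty e^{-\delta s}(\cdots)\,ds \geq \int_0^T e^{-\delta t}(\cdots)\,dt + e^{-\delta T}\cU(\sigma(T)),
\]
by definition of $\cU(\sigma(T))$. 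Taking the infimum over admissible $\sigma$ on the left yields $\cU(\mu)$, while the right-hand side is bounded below by the right-hand side of \eqref{DPP} (since the infimum there is over the same family of admissible paths restricted to the truncated cost).

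The only genuinely delicate step is the admissibility check for the concatenation $\tilde\sigma$: one must verify that its minimal velocity (in the sense following Theorem 8.3.1 of \cite{AGS}) is the piecewise concatenation of the two minimal velocities and that the weighted $L^p$ integrability \eqref{vdeltapbound} is preserved. This follows because on each subinterval the metric derivative coincides with $\|v(t)\|_{L^p(\sigma(t))}$, and both integrals $\int_0^T e^{-\delta t}\|\dot\sigma\|^p\,dt$ and $e^{-\delta T}\int_0^\infty e^{-\delta s}\|\dot\tau\|^p\,ds$ are finite. Everything else is bookkeeping on the integrals once this admissibility is secured.
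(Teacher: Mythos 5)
Your proposal is correct and follows essentially the same route as the paper's proof: the lower bound by restricting an admissible path to $[0,T]$ and using the shifted tail $s\mapsto\sigma(T+s)$ as a competitor for $\cU(\sigma(T))$, and the upper bound by concatenating $\sigma|_{[0,T]}$ with an $\epsilon$-optimal continuation and splitting the discounted cost at $T$. Your explicit attention to the admissibility of the concatenated path and to finiteness of $\cU(\sigma(T))$ is a small refinement the paper leaves implicit, but the argument is the same.
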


\begin{proof}
Let $\sigma$ be admissible for $\cU(\mu)$. Then $[0,\infty)\ni t\mapsto \sigma(t+T)$ is admissible for $\cU(\sigma(T))$ and
\begin{align*}
e^{-\delta T}\cU(\sigma(T)) +\int^T_0e^{-\delta t}\left(\frac{1}{p}||\dot\sigma(t)||^p -\cV(\sigma(t)) \right)dt &\le \int^\infty_0e^{-\delta (t+T)}\left(\frac{1}{p}||\dot\sigma(t+T)||^p -\cV(\sigma(t+T)) \right)dt  \\
&\quad + \int^T_0e^{-\delta t}\left(\frac{1}{p}||\dot\sigma(t)||^p -\cV(\sigma(t)) \right)dt \\
&=\int^\infty_0e^{-\delta t}\left(\frac{1}{p}||\dot\sigma(t)||^p -\cV(\sigma(t)) \right)dt.
\end{align*}
Consequently, the left hand of side of \eqref{DPP} is at least as large as the right hand side.

\par Again let $\sigma$ be admissible for $\cU(\mu)$ and $\epsilon>0$. Choose a path $\bar{\sigma}\in AC_{p,\delta}(\cMp)$ such that $s\mapsto \bar{\sigma}(s+T)$ is $\epsilon$ optimal for $\cU(\sigma(T))$. That is
$$
U(\sigma(T))+\epsilon >  \int^\infty_0e^{-\delta t}\left(\frac{1}{p}||\dot{\bar\sigma}(s+T)||^p -\cV(\bar\sigma(s+T)) \right)ds.
$$
Changing variables in the above integral $t=s+T$ and manipulating similar to how we did above gives
$$
e^{-\delta T}\cU(\sigma(T)) +\int^T_0e^{-\delta t}\left(\frac{1}{p}||\dot\sigma(t)||^p -\cV(\sigma(t)) \right)dt > -e^{-\delta T}\epsilon + U(\mu).
$$
It now follows that the right hand of side of \eqref{DPP} is no less than the left hand side.
\end{proof}

\begin{rem}
It is clear that the infimum in \eqref{DPP} can be taken over paths $\sigma\in AC_{p}([0,T],\cMp)$ with $\sigma(0)=\mu$.
\end{rem}
\begin{rem}\label{DPPminRemark}
Suppose that $\sigma$ is a minimizer for $\cU(\mu)$, then
\begin{align*}
\cU(\mu)&=\int^\infty_0e^{-\delta t}\left(\frac{1}{p}||\dot\sigma(t)||^p -\cV(\sigma(t)) \right)dt \\
&=\int^T_0e^{-\delta t}\left(\frac{1}{p}||\dot\sigma(t)||^p -\cV(\sigma(t)) \right)dt +\int^\infty_T e^{-\delta t}\left(\frac{1}{p}||\dot\sigma(t)||^p -\cV(\sigma(t)) \right)dt \\
&=\int^T_0e^{-\delta t}\left(\frac{1}{p}||\dot\sigma(t)||^p -\cV(\sigma(t)) \right)dt +e^{-\delta T}\int^\infty_0e^{-\delta s}\left(\frac{1}{p}||\dot{\sigma}(T+s)||^p -\cV(\sigma(T+s)) \right)dt \\
&\ge \int^T_0e^{-\delta t}\left(\frac{1}{p}||\dot\sigma(t)||^p -\cV(\sigma(t)) \right)dt + e^{-\delta T}\cU(\sigma(T))\\
&\ge \cU(\mu).
\end{align*}
The last inequality follows from \eqref{DPP}. Hence $\sigma$ is also a minimizer of
\begin{equation}\label{DPPfunctional}
AC_{p,\delta}(\cMp)\ni \rho\mapsto  e^{-\delta T}\cU(\rho(T))+\int^T_0e^{-\delta t}\left(\frac{1}{p}||\dot\rho(t)||^p -\cV(\rho(t)) \right)dt
\end{equation}
for each $T>0$.
\end{rem}
Next we derive continuity properties of value functions. First we show that $\cU$ is in general continuous and then refine this statement in terms of the modulus of
continuity of $\cV$.
\begin{prop}\label{ContinuityU}
$\cU\in C(\cMp)$.
\end{prop}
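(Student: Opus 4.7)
The plan is to prove continuity of $\cU$ at each $\mu\in\cMp$ by applying the dynamic programming principle (Proposition \ref{DPPprof}) with short constant-speed Wasserstein geodesics as test paths, so that a small $W_p$-perturbation of the starting measure produces a small change in $\cU$. This trick reduces the question to simultaneously controlling the kinetic and potential contributions over a short time $h\to 0$.

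Fix $\mu,\nu\in\cMp$ and set $W:=W_p(\mu,\nu)$. Let $h>0$ be a parameter to be chosen, and let $\tau:[0,h]\to\cMp$ be a constant-speed $W_p$-geodesic from $\nu$ to $\mu$, so $\|\dot\tau(t)\|\equiv W/h$ for a.e.\ $t$. Using the dynamic programming principle in the form permitted by the Remark following Proposition \ref{DPPprof} (where test paths may live on $[0,h]$), the choice $\sigma=\tau$ yields
\begin{equation*}
\cU(\nu)\le e^{-\delta h}\cU(\mu)+\int_0^h e^{-\delta t}\left(\frac{1}{p}\left(\frac{W}{h}\right)^p-\cV(\tau(t))\right)dt\le e^{-\delta h}\cU(\mu)+\frac{W^p}{p\,h^{p-1}}+h\sup_{s\in[0,h]}|\cV(\tau(s))|.
\end{equation*}
The first term converges to $\cU(\mu)$ as $h\to 0$. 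I would choose $h=W^{\alpha}$ for any fixed $\alpha\in(0,q)$ (for instance $\alpha=1$); then $W^p/h^{p-1}=W^{p-\alpha(p-1)}\to 0$ while $h\to 0$ as $W\to 0$. For the potential term, every $\tau(s)$ satisfies $W_p(\tau(s),\mu)\le W$, and continuity of $\cV$ on $(\cPp,W_p)$ (a standing hypothesis in this section) bounds $\sup_{s}|\cV(\tau(s))|$ uniformly for small $W$; multiplying by $h\to 0$ sends this contribution to zero as well.

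Reversing the roles of $\mu$ and $\nu$, using the geodesic from $\mu$ to $\nu$, produces the symmetric estimate $\cU(\mu)\le e^{-\delta h}\cU(\nu)+o(1)$ as $W\to 0$. Since $\cU(\nu)$ is uniformly bounded for $\nu$ in a $W_p$-neighborhood of $\mu$ by Lemma \ref{BoundsLem}, $e^{-\delta h}\cU(\nu)=\cU(\nu)+o(1)$, and combining the two one-sided bounds yields $|\cU(\mu)-\cU(\nu)|\to 0$ as $W_p(\mu,\nu)\to 0$.

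The only delicate point is balancing the two small-time error terms: the kinetic cost $W^p/h^{p-1}$ penalizes taking $h$ too small relative to $W$, while controlling the potential requires $h\to 0$; the admissible window $\alpha\in(0,q)$ identified above accommodates both. The remaining verifications, namely that a constant-speed $W_p$-geodesic exists and is a legitimate test path in the dynamic programming principle, are standard for $\cPp$ and are built into the Remark following Proposition \ref{DPPprof}.
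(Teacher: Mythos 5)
Your proof is correct, and it reaches the conclusion by a somewhat different route than the paper. Both arguments rest on the same mechanism---perturb the initial measure along a short constant-speed $W_p$-geodesic and balance the kinetic cost $W^p/(p h^{p-1})$ against a potential cost of order $h$---but the paper implements it by explicitly concatenating the geodesic with a (near-)optimal path for the far endpoint, passing to the limit $\nu\to\mu$ with the geodesic time $\eta$ held fixed, and only then sending $\eta\to 0$; you instead delegate the tail of the competitor to the value function via the dynamic programming principle (legitimate, since Proposition \ref{DPPprof} is established before this statement and its proof does not use continuity of $\cU$), and you couple the geodesic time $h=W^\alpha$, $\alpha\in(0,q)$, to the perturbation size. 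Your packaging buys a clean, quantitative two-sided estimate (essentially a local modulus of continuity), at the price of invoking Lemma \ref{BoundsLem}---hence the growth condition \eqref{Vgrowth} and the largeness condition \eqref{cond18} on $\delta$---to replace $e^{-\delta h}\cU(\nu)$ by $\cU(\nu)+o(1)$; the paper's order of limits needs no bound on $\cU$ near $\mu$, which is why it can remark that continuity holds with no constraint on $\delta$ (beyond $\cU$ being finite). Note also that your appeal to Lemma \ref{BoundsLem} is removable: from $\cU(\mu)\le e^{-\delta h}\cU(\nu)+o(1)$ one can simply rearrange to get $\cU(\nu)\ge e^{\delta h}\bigl(\cU(\mu)-o(1)\bigr)\to\cU(\mu)$, so only finiteness of $\cU(\mu)$ is needed for the lower-semicontinuity half, and the two proofs then require the same hypotheses (continuity of $\cV$ in $W_p$ and finiteness of $\cU$).
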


\begin{proof}
Assume $\mu_n\rightarrow \mu$ in $\cMp$ and let $\sigma$ be admissible for $\cU(\mu)$. Define for $\eta>0$
$$
\sigma_n(s):=
\begin{cases}
\underline{\sigma}_n(s/\eta), \quad 0\le s\le \eta\\
\sigma(s-\eta), \quad \eta\le s<\infty
\end{cases}
$$
where $\underline{\sigma}_n:[0,1]\rightarrow \cMp$ is a constant speed geodesic joining $\mu_n$ to $\mu$.  As $\sigma_n$ is admissible for $\cU(\mu_n)$
\begin{align*}
\cU(\mu_n) & \le \int^\infty_0e^{-\delta s}\left(\frac{1}{p}||\dot\sigma_n(s)||^p -\cV(\sigma_n(s)) \right)ds \\
& = \int^\eta_0 e^{-\delta s}\left(\frac{1}{p}||\dot\sigma_n(s)||^p -\cV(\sigma_n(s)) \right)ds  \\
&\quad + \int^\infty_\eta e^{-\delta s}\left(\frac{1}{p}||\dot\sigma_n(s)||^p -\cV(\sigma_n(s)) \right)ds \\
& = \int^\eta_0 e^{-\delta s}\left(\frac{1}{p}\left(\frac{W_p(\mu,\mu_n)}{\eta}\right)^p -\cV(\underline{\sigma}_n(s/\eta)) \right)ds\\
& \quad + \int^\infty_0 e^{-\delta (t+\eta)}\left(\frac{1}{p}||\dot\sigma(t)||^p -\cV(\sigma(t)) \right)dt.
\end{align*}
Since $W_p(\underline{\sigma}_n(s/\eta),\mu)\le C$ for some universal constant $C$, we can pass to the limit above using the continuity of $\cV$
to obtain
$$
\limsup_{n\rightarrow \infty}\cU(\mu_n)\le \frac{1-e^{-\delta \eta}}{\delta} \cV(\mu) + e^{-\delta \eta}\int^\infty_0e^{-\delta t}\left(\frac{1}{p}||\dot\sigma(t)||^p -\cV(\sigma(t)) \right)dt.
$$
As $\eta$ and $\sigma$ were arbitrary, we conclude that $\limsup_{n\rightarrow \infty}\cU(\mu_n)\le \cU(\mu)$.

\par Now choose any $\epsilon_n>0$ tending to $0$ as $n\rightarrow \infty$, and $\sigma_n$ admissible for $\cU(\mu_n)$ for which
$$
\cU(\mu_n)>-\epsilon_n + \int^\infty_0e^{-\delta s}\left(\frac{1}{p}||\dot\sigma_n(s)||^p -\cV(\sigma_n(s)) \right)ds.
$$
Define
$$
\sigma_{n,\eta}(t):=
\begin{cases}
\underline{\sigma}_n(1- t/\eta), \quad 0\le t\le \eta\\
\sigma_n(t-\eta), \quad \eta\le t<\infty
\end{cases}
$$
and observe that $\sigma_{n,\eta}$ is admissible for $\cU(\mu)$. Similar to our computations above,
 we find

 \begin{align*}
 \cU(\mu) &\le \int^\eta_0e^{-\delta t} \left\{\frac{1}{p}\left(\frac{W_p(\mu,\mu_n)}{\eta}\right)^p -\cV(\underline{\sigma}_n(1- t/\eta))\right\}dt \\
 & \quad + \int^\infty_0 e^{-\delta (\eta+s)}\left(\frac{1}{p}||\dot\sigma_n(s)||^p -\cV(\sigma_n(s)) \right)ds \\
&<\int^\eta_0e^{-\delta t} \left\{\frac{1}{p}\left(\frac{W_p(\mu,\mu_n)}{\eta}\right)^p -\cV(\underline{\sigma}_n(1- t/\eta))\right\}dt \\
&\quad + (\epsilon_n + \cU(\mu_n))e^{-\delta\eta}.
 \end{align*}
We easily compute $\cU(\mu)\le \liminf_{n\rightarrow\infty}\cU(\mu_n) e^{-\delta\eta} +\frac{1-e^{-\delta \eta}}{\delta} \cV(\mu)$. Since $\eta>0$ was arbitrary, $\cU$ is lower semicontinuous.
\end{proof}

\begin{rem}
It is worth noting the proof of continuity of the generalized value functions on infinite horizons requires less hypotheses than the proof for finite horizons (as the value function is continuous with no constraints on $\delta$).  See Proposition 2.5 of \cite{HyndKim}.
\end{rem}

\begin{prop}
If $V$ is uniformly continuous with modulus $\omega$, then $\cU$ is uniformly continuous with modulus $\omega/\delta$.
\end{prop}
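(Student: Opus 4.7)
The plan is to establish $\cU(\mu)-\cU(\nu)\le \omega(W_p(\mu,\nu))/\delta$ for all $\mu,\nu\in\cMp$; the reverse inequality will follow by symmetry. The core task is to transfer a near-optimal admissible path for $\cU(\nu)$ into an admissible path for $\cU(\mu)$ without increasing the kinetic contribution to the action, while keeping the transferred path within $W_p$-distance at most $W_p(\mu,\nu)$ of the original at every time $t\ge 0$. Once this transfer is in hand, the modulus $\omega/\delta$ drops out by integrating the pointwise bound $|\cV(\sigma(t))-\cV(\tilde\sigma(t))|\le \omega(W_p(\mu,\nu))$ against the weight $e^{-\delta t}$, using $\int_0^\infty e^{-\delta t}\,dt=1/\delta$.

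To build the transfer, fix $\epsilon>0$, pick $\sigma$ admissible for $\cU(\nu)$ with action at most $\cU(\nu)+\epsilon$, and use the probabilistic representation from Theorem \ref{ProbRep} to lift $\sigma$ to a measure $\eta$ on $C([0,\infty);\R^d)$ with $(e_t)_\#\eta=\sigma(t)$. Let $\pi\in\Gamma(\mu,\nu)$ be an optimal Kantorovich plan for $W_p(\mu,\nu)$, and glue $\pi$ with $\eta$ along the common variable $y=\gamma(0)$ to obtain a measure $\tilde\eta$ on pairs $(x,\gamma)$. I then set
\begin{equation*}
\tilde\sigma(t) \;:=\; \bigl((x,\gamma)\mapsto \gamma(t)+x-\gamma(0)\bigr)_{\#}\tilde\eta,
\end{equation*}
so that each curve in the support of $\eta$ is rigidly translated by the optimal displacement at its starting point. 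Then $\tilde\sigma(0)=\mu$; the coupling induced by $\tilde\eta$ yields $W_p(\tilde\sigma(t),\sigma(t))^p\le \int|x-y|^p\,d\pi = W_p(\mu,\nu)^p$ uniformly in $t$; and translation invariance combined with the probabilistic representation gives $W_p(\tilde\sigma(s),\tilde\sigma(t))^p\le \int|\gamma(t)-\gamma(s)|^p\,d\eta \le (t-s)^{p-1}\int_s^t||\dot\sigma(r)||^p\,dr$ by Jensen and the identity $||v(r)||_{L^p(\sigma(r))}=||\dot\sigma(r)||$. Consequently $||\dot{\tilde\sigma}(t)||\le ||\dot\sigma(t)||$ a.e., and $\tilde\sigma\in AC_{p,\delta}(\cMp)$.

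The path $\tilde\sigma$ is therefore admissible for $\cU(\mu)$, and
\begin{align*}
\cU(\mu)
&\le \int_0^\infty e^{-\delta t}\Bigl(\tfrac{1}{p}||\dot{\tilde\sigma}(t)||^p - \cV(\tilde\sigma(t))\Bigr)dt \\
&\le \int_0^\infty e^{-\delta t}\Bigl(\tfrac{1}{p}||\dot\sigma(t)||^p - \cV(\sigma(t))\Bigr)dt + \int_0^\infty e^{-\delta t}\,\omega(W_p(\mu,\nu))\,dt \\
&\le \cU(\nu)+\epsilon + \omega(W_p(\mu,\nu))/\delta,
\end{align*}
by the kinetic bound, the uniform-continuity hypothesis together with the monotonicity of $\omega$, and the near-optimality of $\sigma$. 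Letting $\epsilon\downarrow 0$ and interchanging $\mu$ with $\nu$ closes the argument.

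The main obstacle I anticipate is the rigorous justification that the rigidly-translated family of curves really produces a $p$-absolutely continuous path whose metric derivative is dominated by that of $\sigma$; this is the step that genuinely requires Theorem \ref{ProbRep} and a careful disintegration of $\eta$ and $\pi$ along the common marginal $\nu$. Once the construction is in place, the remainder is a one-line consequence of $\int_0^\infty e^{-\delta t}\,dt=1/\delta$.
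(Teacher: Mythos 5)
Your proof is correct, and its core construction is the same as the paper's: lift a near-optimal path for one endpoint via the probabilistic representation (Theorem \ref{ProbRep}) and rigidly translate each curve by the optimal displacement of its starting point, which leaves the metric derivative dominated, keeps the two paths within $W_p(\mu,\nu)$ of each other uniformly in $t$, and then the factor $1/\delta$ comes from $\int_0^\infty e^{-\delta t}\,dt$. Where you genuinely diverge is in how the optimal coupling enters. The paper first restricts to measures $\mu^1,\mu^2$ that are absolutely continuous with respect to Lebesgue measure, so that the optimal transport \emph{map} $T$ (and $T^{-1}$) exists; its translation is implemented by the map $S(x,\gamma)=(T^{-1}(x),\gamma+T^{-1}(x)-x)$ acting on $\eta$, and the general case is then recovered by density of absolutely continuous measures in $\cMp$ (Lemma 7.1.10 of \cite{AGS}) together with the continuity of $\cU$ established in the preceding proposition. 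You instead work with an arbitrary pair of measures by taking an optimal \emph{plan} $\pi\in\Gamma(\mu,\nu)$ and gluing it to $\eta$ by disintegrating both along the common marginal $\nu$; this removes the need for the approximation step and the appeal to Proposition \ref{ContinuityU}, at the cost of justifying the disintegration-based gluing, which is standard here since $\R^d$ and $\Gamma$ are Polish (the latter by Proposition \ref{PiMetric}). Your verification of the three key facts (that $\tilde\sigma(0)=\mu$, that $W_p(\tilde\sigma(t),\sigma(t))\le W_p(\mu,\nu)$ for all $t$, and that $W_p(\tilde\sigma(s),\tilde\sigma(t))^p\le (t-s)^{p-1}\int_s^t\|\dot\sigma(r)\|^p\,dr$, hence $\|\dot{\tilde\sigma}(t)\|\le\|\dot\sigma(t)\|$ a.e.) matches the estimates the paper imports from Proposition 2.6 of \cite{HyndKim}, so the conclusion follows exactly as in the paper; your route is, if anything, slightly more self-contained.
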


\begin{proof}
Assume first that $\mu^1$ and $\mu^2$ are absolutely continuous with respect to Lebesgue measure. Let $T:\R^d\rightarrow \R^d$ be the unique Borel mapping
that pushes $\mu^1$ onto $\mu^2$ and
$$
W_p(\mu^1,\mu^2)^p=\int_{\R^d}|x-T(x)|^pd\mu^1(x)=\int_{\R^d}|y-T^{-1}(y)|^pd\mu^2(y).
$$
Let $\epsilon>0$ and choose $\sigma^2$ admissible for $\cU(\mu^2)$ such that
$$
\cU(\mu^2) > -\epsilon + \int^\infty_0e^{-\delta t}\left(\frac{1}{p}||\dot\sigma^2(t)||^p -\cV(\sigma^2(t)) \right)dt.
$$
By the probabilistic representation of $AC_{p,\delta}(\cMp)$ paths detailed in Theorem \ref{ProbRep}, there is a Borel probability measure $\eta$ on $\R^d \times \Gamma$ such that
$$
\sigma^2(t):=e(t)_{\#}\eta, \quad t\ge 0.
$$
Here $\Gamma$ is the space $C([0,\infty); \R^d)$ with the topology of local uniform convergence, 
\begin{equation}\label{EvalMap}
e(t): \R^d\times \Gamma\rightarrow \R^d; (x,\gamma)\mapsto \gamma(t)
\end{equation}
is the {\it evaluation map} and $\eta$ is concentrated on $(x,\gamma)$ satisfying $\gamma(0)=x$ and $\dot\gamma(t)=v(\gamma(t),t)$, a.e. $t\ge 0$. The field $v$ is the minimal velocity of $\sigma$.

\par Define a map
$$
S(x,\gamma):=(T^{-1}(x), \gamma + T^{-1}(x)-x)
$$
and a measure on $\R^d\times \Gamma$
$$
\lambda:=S_{\#}\eta.
$$
Also set
$$
\sigma^1(t):=e(t)_{\#}\lambda, \quad t\ge 0.
$$
As in Proposition 2.6 of \cite{HyndKim}, one checks that $\sigma^1$ is admissible for $\cU(\mu^2)$, that $||\dot\sigma^1(t)||\le ||\dot\sigma^2(t)||$ for almost every $t\ge 0$, and
$$
W_p(\sigma^1(t),\sigma^2(t))\le W_p(\mu^1, \mu^2).
$$
Therefore, it follows that
\begin{align*}
\cU(\mu^1)-\cU(\mu^2)&\le \epsilon +  \int^\infty_0e^{-\delta t}\left(\frac{1}{p}||\dot\sigma^1(t)||^p -\frac{1}{p}||\dot\sigma^2(t)||^p+ \cV(\sigma^2(t))- \cV(\sigma^1(t)) \right)dt \\
&\le \epsilon + \int^\infty_0 e^{-\delta t}\omega(W_p(\sigma^1(t),\sigma^2(t)))dt\\
& \le \epsilon +\int^\infty_0 e^{-\delta t}\omega(W_p(\mu^1,\mu^2))dt\\
&= \epsilon +\frac{1}{\delta} \omega(W_p(\mu^1,\mu^2)).
\end{align*}
Interchanging, $\mu^1$ and $\mu^2$ yields the claim in the case where $\mu^1$ and $\mu^2$ are absolutely continuous. The general assertion now follows from the density of
absolutely continuous measures in $\cMp$ (Lemma 7.1.10 \cite{AGS}) and the previous proposition.
\end{proof}



\section{Minimizing trajectories}\label{MinSec}
In this section, we prove our main existence result, Theorem \ref{WeakExistenceThm}. Our main tool is a compactness lemma, which involves the weak or
narrow convergence of sequences of paths in $AC_{p,\delta}(\cMp)$.  We begin our study of minimizing paths by deriving
 the Euler-Poisson system \eqref{StaticEP} as a necessary condition.

\begin{thm}\label{EPthm}
Assume $\cV$ is Lipschitz continuous, and that for each $\mu\in \cMp$, there is a mapping $\nabla\cV(\mu)\in L^p(\mu)$ for which
\begin{equation}\label{DerivativeV}
\lim_{\epsilon\rightarrow 0}\frac{\cV((\id +\epsilon \eta)_{\#}\mu) - \cV(\mu)}{\epsilon} = \int_{\R^d}\nabla \cV(\mu)\cdot \eta d\mu
\end{equation}
for each $\eta \in C^\infty_c(\R^d; \R^d)$.  Then for any minimizer $\sigma$ of $\cU(\mu)$, the equations
$$
\begin{cases}
\partial_t(\sigma |v|^{p-2}v) + \nabla\cdot (\sigma|v|^{p-2} v\otimes v) = -\sigma \nabla\cV(\sigma)  + \sigma\delta |v|^{p-2}v \\
\hspace{1.28in}\partial_t\sigma +\nabla\cdot(\sigma v)=0
\end{cases}
$$
hold in the sense of distributions on $\R^d\times(0,\infty)$; here $v$ is the minimal velocity for $\sigma$.
\end{thm}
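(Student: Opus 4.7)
The plan is to derive the Euler-Poisson system as the first-order optimality condition obtained by perturbing the minimizer $\sigma$ via the flow of a smooth, compactly supported vector field and setting the first variation of the action to zero.

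Fix $\phi \in C^\infty_c(\R^d \times (0,\infty); \R^d)$ and, for $\epsilon$ near zero, define the spatial flow $\Phi_\epsilon(\cdot, t): \R^d \to \R^d$ by
\[
\partial_\epsilon \Phi_\epsilon(x,t) = \phi(\Phi_\epsilon(x,t), t), \qquad \Phi_0(x,t) = x,
\]
and set $\sigma^\epsilon(t) := \Phi_\epsilon(\cdot, t)_\#\sigma(t)$. Because $\phi(\cdot, t) \equiv 0$ outside a compact interval $[t_1, t_2] \subset (0, \infty)$, we have $\Phi_\epsilon(\cdot, t) = \id$ and $\sigma^\epsilon(t) = \sigma(t)$ for $t \notin [t_1, t_2]$, so $\sigma^\epsilon(0) = \mu$ and $\sigma^\epsilon \in AC_{p,\delta}(\cMp)$ is admissible for $\cU(\mu)$. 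A direct computation with the continuity equation for $\sigma$ (testing against $\psi \circ \Phi_\epsilon$) shows that $\sigma^\epsilon$ satisfies a continuity equation with the pushed-forward velocity $\tilde v^\epsilon$ given by $\tilde v^\epsilon(\Phi_\epsilon(x,t), t) := \partial_t \Phi_\epsilon(x,t) + D_x\Phi_\epsilon(x,t)\,v(x,t)$, so by the minimality characterization (Theorem 8.3.1 of \cite{AGS}) of the actual minimal velocity of $\sigma^\epsilon$, $||\dot\sigma^\epsilon(t)||^p \le ||\tilde v^\epsilon(t)||^p_{L^p(\sigma^\epsilon(t))}$, with equality at $\epsilon = 0$.

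Consider the comparison functional $J(\epsilon) := \int_0^\infty e^{-\delta t}\bigl(\tfrac{1}{p}||\tilde v^\epsilon(t)||^p_{L^p(\sigma^\epsilon(t))} - \cV(\sigma^\epsilon(t))\bigr) dt$. The action of $\sigma^\epsilon$ is bounded above by $J(\epsilon)$, equals $J(0) = \cU(\mu)$ at $\epsilon = 0$, and is bounded below by $\cU(\mu)$ by minimality, so $J$ attains a minimum at $\epsilon = 0$ and $J'(0) = 0$. Because $\sigma^\epsilon = \sigma$ and $\tilde v^\epsilon = v$ off $[t_1, t_2]$, differentiation under the integral sign is straightforward; combining the change of variables $\int|\tilde v^\epsilon|^p d\sigma^\epsilon = \int|\partial_t \Phi_\epsilon + D_x\Phi_\epsilon\, v|^p d\sigma$ with hypothesis \eqref{DerivativeV} applied (at each $t$) to $\eta = \phi(\cdot, t)$ yields the weak identity
\[
0 = \int_0^\infty\!\int_{\R^d} e^{-\delta t}\bigl(|v|^{p-2} v \cdot (\partial_t \phi + (D_x\phi) v) - \nabla \cV(\sigma(t))\cdot\phi\bigr)\,d\sigma(t)\,dt.
\]
Integration by parts in $t$ produces the $\delta\sigma |v|^{p-2} v$ term from $\partial_t e^{-\delta t}$, integration by parts in $x$ produces $\nabla\cdot(\sigma|v|^{p-2}v\otimes v)$, and arbitrariness of $\phi$ gives the first equation of the Euler-Poisson system in the sense of distributions; the second (continuity) equation holds by the definition of $v$.

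The main obstacle is the reduction of the opaque quantity $\tfrac{d}{d\epsilon}||\dot\sigma^\epsilon||^p$ (for which the true minimal velocity $v^\epsilon$ has no closed form) to that of $||\tilde v^\epsilon||^p_{L^p(\sigma^\epsilon)}$: one exploits the upper bound with equality at $\epsilon = 0$ to promote $\epsilon = 0$ to a minimum of the comparison functional $J$, whose derivative is explicitly computable. A secondary technical point is justifying the use of \eqref{DerivativeV} for the perturbation $\Phi_\epsilon(\cdot, t)_\#\sigma(t)$ rather than the linear $(\id + \epsilon \phi(\cdot, t))_\#\sigma(t)$, which follows since $|\Phi_\epsilon - (\id + \epsilon\phi)| = O(\epsilon^2)$ and $\cV$ is Lipschitz.
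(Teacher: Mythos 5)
Your proposal is correct and follows essentially the same route as the paper: perturb the minimizer by a compactly supported space-time vector field, use the transported velocity as an explicit upper bound for the metric derivative with equality at $\epsilon=0$ so that the comparison functional has a critical point there, and read off the weak Euler--Poisson identity, with the $\delta|v|^{p-2}v$ term coming from the exponential weight. The only (harmless) deviation is that you push forward by the nonlinear flow $\Phi_\epsilon$ of $\phi$ and then reconcile with \eqref{DerivativeV} via the $O(\epsilon^2)$ estimate and Lipschitz continuity of $\cV$, whereas the paper uses the linear perturbation $(\id+\epsilon\Psi)_{\#}\sigma(t)$ directly.
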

\begin{proof}  Let $\Psi\in C^\infty_c(\R^d\times (0,\infty); \R^d)$, and set
$$
\begin{cases}
\sigma^\epsilon(t):=(\id +\epsilon \Psi(t))_{\#}\sigma(t)\\
v^\epsilon(t):=(v(t) + \epsilon(\partial_t\Psi(t) + (v(t)\cdot \nabla)\Psi(t)))\circ (\id +\epsilon\Psi(t))^{-1}
\end{cases}
$$
for $\epsilon $ so small that $x\mapsto x+\epsilon\Psi(x,t)$ is invertible for each $t>0$. One checks that
$$
\partial_t\sigma^\epsilon + \nabla\cdot (\sigma^\epsilon v^\epsilon)=0, \quad \R^d\times (0,\infty)
$$
and that $\sigma^\epsilon$ is admissible for $\cU(\mu)$. Thus
\begin{align*}
\cU(\mu) & =\int^\I_0e^{-\delta t}\left(\frac{1}{p}||\dot\sigma(t)||^p -\cV(\sigma(t)) \right)dt\\
& =\int^\I_0e^{-\delta t}\left(\frac{1}{p}\int_{\R^d}|v(x,t)|^pd\sigma_t(x) -\cV(\sigma(t)) \right)dt\\
&\le \int^\I_0e^{-\delta t}\left(\frac{1}{p}||\dot\sigma^\epsilon(t)||^p -\cV(\sigma^\epsilon(t)) \right)dt\\
&\le \int^\I_0e^{-\delta t}\left(\frac{1}{p}\int_{\R^d}|v^\epsilon(x,t)|^pd\sigma^\epsilon_t(x) -\cV(\sigma^\epsilon(t)) \right)dt\\
&= \int^\I_0e^{-\delta t}\left(\frac{1}{p}\int_{\R^d}|v + \epsilon(\partial_t \Psi+ (v\cdot \nabla)\Psi)|^pd\sigma_t(x) -\cV((\id +\epsilon \Psi(t))_{\#}\sigma(t)) \right)dt.
\end{align*}
Consequently, the derivative on the right hand side expression above must be zero when taken at $\epsilon=0$. Employing \eqref{DerivativeV} and our
assumption that $\cV$ is Lipschitz, we use standard limit theorems from measure theory to compute the derivative in question and find
\begin{equation}\label{StaticEPIntegral2}
0= \int^\I_0 \int_{\R^d}e^{-\delta t}\left\{(\partial_t\Psi+ (v\cdot \nabla)\Psi)\cdot |v|^{p-2}v - \nabla\cV(\sigma)\cdot \Psi\right\}d\sigma_t(x)dt.
\end{equation}
Choosing $\Psi(x,t)=e^{\delta t}\Phi(x,t)$, where $\Phi\in C^\infty_c(\R^d\times (0,\infty); \R^d)$ gives \eqref{StaticEP}.
\end{proof}
Note that \eqref{StaticEPIntegral2} also holds for $\Psi\in C^1_c(\R^d\times(0,\infty);\R^d)$. Also observe that by selecting $\Psi(x,t)=\eta(x)f(t)$ in \eqref{StaticEPIntegral2}, where $f\in C^\infty_c(0,\infty)$ and $\eta\in C^1_c(\R^d)$, gives
\begin{equation}\label{StaticEPIntegral}
\frac{d}{dt}\int_{\R^d}e^{-\delta t}|v|^{p-2}v\cdot \eta d\sigma_t = \int_{\R^d}e^{-\delta t}\left\{|v|^{p-2}v\cdot((v\cdot\nabla) \eta) - \nabla \mathcal{V}(\sigma)\cdot \eta\right\}d\sigma_t.
\end{equation}
Equation \eqref{StaticEPIntegral} holds in the sense of distributions on $(0,\infty)$.

\par It now follows that the $L^1[0,\infty)$ function
$$
[0,\infty)\ni t\mapsto \int_{\R^d}e^{-\delta t}|v(x,t)|^{p-2}v(x,t)\cdot \eta(x) d\sigma_t(x)
$$
can be identified with an absolutely continuous function belonging to $W^{1,1}[0,\infty)$. Moreover, standard arguments can be used to prove $[0,\infty)\ni t\mapsto \sigma(t)|v(t)|^{p-2}v(t)$ has a continuous representative with values in the dual space of the closure of $C^1_c(\R^d; \R^d)$ with the norm
$$
||\eta||:=\sup_{\R^d}|\eta| + \sup_{\R^d}|\nabla\eta|.
$$
See the proof of Lemma 8.1.2 in \cite{AGS} for more details on this technical point.

\begin{cor}
Assume the hypotheses of Theorem \ref{EPthm} and identify $t\mapsto \sigma(t)|v(t)|^{p-2}v(t)\in \left(C^1_c(\R^d; \R^d)\right)'$ with its continuous representative. Then for all $\eta \in C^1_c(\R^d; \R^d)$
\begin{equation}\label{LimitTinf}
\lim_{t \rightarrow \infty}e^{-\delta t}\int_{\R^d}|v(t,x)|^{p-2}v(t,x)\cdot \eta(x)d\sigma_t(x)=0
\end{equation}
and
\begin{equation}\label{WeakConj}
\liminf_{\epsilon\rightarrow 0^+}\frac{\cU((\id +\epsilon \eta)_{\#}\sigma(t))-\cU(\sigma(t))}{\epsilon}\ge -\int_{\R^d}|v(x,t)|^{p-2}v(x,t)\cdot \eta(x) d\sigma_t(x)
\end{equation}
for $t>0$.
\end{cor}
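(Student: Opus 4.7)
The plan is to handle the two assertions separately, both leaning on the fact (established just before the corollary) that $F(t) := e^{-\delta t}\int_{\R^d}|v(x,t)|^{p-2}v(x,t)\cdot\eta(x)\,d\sigma_t(x)$ is in $L^1[0,\infty)$ and admits an absolutely continuous representative in $W^{1,1}[0,\infty)$ whose distributional derivative is the right-hand side of \eqref{StaticEPIntegral}.

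For \eqref{LimitTinf}, since $F$ is absolutely continuous with $F'\in L^1$, the limit $F(\infty) = F(0)+\int_0^\infty F'(s)\,ds$ exists. Since $F\in L^1[0,\infty)$ as well, a nonzero limit would force $|F|$ to be bounded below by a positive constant on a half-line, contradicting integrability. Hence $F(\infty)=0$, which is exactly \eqref{LimitTinf}.

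For \eqref{WeakConj}, I would apply the dynamic programming principle in the form of Remark \ref{DPPminRemark}. Fix $t>0$ and a cutoff $\phi\in C^1([0,t])$ with $\phi(0)=0$ and $\phi(t)=1$, and set $\Psi(s,x):=\phi(s)\eta(x)$. Define the perturbed path $\rho^\epsilon(s):=(\id+\epsilon\Psi(s))_\#\sigma(s)$ on $[0,t]$, so $\rho^\epsilon(0)=\mu$ and $\rho^\epsilon(t)=(\id+\epsilon\eta)_\#\sigma(t)$. Since $\sigma$ minimizes the finite-horizon functional \eqref{DPPfunctional},
\[
e^{-\delta t}\bigl(\cU(\rho^\epsilon(t))-\cU(\sigma(t))\bigr)\;\ge\;\int_0^t e^{-\delta s}\Bigl[\tfrac{1}{p}||\dot\sigma(s)||^p-\tfrac{1}{p}||\dot\rho^\epsilon(s)||^p+\cV(\rho^\epsilon(s))-\cV(\sigma(s))\Bigr]ds
\]
for every $\epsilon>0$. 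Dividing by $\epsilon$ and invoking dominated convergence (with Lipschitzness of $\cV$ and a convexity/mean-value bound $|\tfrac{1}{p}(|v+\epsilon w|^p-|v|^p)|\le\epsilon(|v|+|w|)^{p-1}|w|$ for $w=\partial_s\Psi+(v\cdot\nabla)\Psi$) shows that the right-hand side divided by $\epsilon$ tends to
\[
-\int_0^t e^{-\delta s}\int_{\R^d}\Bigl[|v|^{p-2}v\cdot\bigl(\phi'(s)\eta+\phi(s)(v\cdot\nabla)\eta\bigr)-\phi(s)\nabla\cV(\sigma_s)\cdot\eta\Bigr]d\sigma_s\,ds.
\]
Pairing \eqref{StaticEPIntegral} against $\phi$ identifies the $\phi[|v|^{p-2}v\cdot((v\cdot\nabla)\eta)-\nabla\cV(\sigma)\cdot\eta]$ integrand with $\phi(s)\,e^{\delta s}F'(s)$; an integration by parts in $s$ produces the boundary term $e^{-\delta t}\int|v(t)|^{p-2}v(t)\cdot\eta\,d\sigma_t$ (using $\phi(t)=1$, $\phi(0)=0$) and an interior term that exactly cancels the $\phi'$ contribution. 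Dividing by $e^{-\delta t}>0$ preserves the inequality and yields \eqref{WeakConj}.

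The main obstacle is justifying the dominated convergence when dividing by $\epsilon$ and interchanging with the time integral. The $\cV$ term is readily controlled by $|\cV(\rho^\epsilon(s))-\cV(\sigma(s))|\le L\,\epsilon\|\phi\|_\infty\|\eta\|_\infty$, but the kinetic term requires an envelope in $L^1([0,t],e^{-\delta s}ds)$ built from $\|v\|_{L^p(\sigma_s)}\in L^p([0,t],e^{-\delta s}ds)$ via \eqref{vdeltapbound} together with H\"older's inequality. A secondary subtlety is that \eqref{StaticEPIntegral} is originally stated against $C^\infty_c(0,\infty)$ test functions; to legitimately pair it with the non-compactly supported $\phi$, one uses the $W^{1,1}$ representative guaranteed by the text preceding the corollary, which makes the $s$-integration by parts rigorous.
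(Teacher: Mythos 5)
Your proposal is correct, and for \eqref{WeakConj} it follows essentially the paper's own route: fix the horizon $t$, perturb the minimizer of \eqref{DPPfunctional} by $(\id+\epsilon\phi(s)\eta)_{\#}\sigma(s)$ with a ramp $\phi$ equal to $1$ at the endpoint, differentiate at $\epsilon=0$ using \eqref{DerivativeV} and the convexity bound for the kinetic term, and then integrate by parts in $s$ against \eqref{StaticEPIntegral} so that only the boundary term $-e^{-\delta t}\int|v(t)|^{p-2}v(t)\cdot\eta\,d\sigma_t$ survives (the paper uses $f\in C^\infty_c(0,\infty)$ with $f(T)=1$ where you use $\phi(0)=0$, $\phi(t)=1$; this is immaterial). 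One small step you use silently but should state: after perturbing, the metric derivative satisfies $||\dot\rho^\epsilon(s)||^p\le\int|v+\epsilon(\partial_s\Psi+(v\cdot\nabla)\Psi)|^p\,d\sigma_s$ because the transported field solves the continuity equation for $\rho^\epsilon$ without being minimal — the paper makes this inequality explicit, and it is what licenses your envelope for dominated convergence. Where you genuinely diverge is \eqref{LimitTinf}: the paper proves it by inserting the test function $h(t)\eta(x)$, with $h(0)=0$ and $h\equiv 1$ for large times, into \eqref{StaticEPIntegral2} and computing the telescoping limit of the boundary term, whereas you simply observe that $F(t)=e^{-\delta t}\int|v|^{p-2}v\cdot\eta\,d\sigma_t$ lies in $L^1[0,\infty)$ and has a $W^{1,1}$ (hence absolutely continuous) representative — facts the paper records just before the corollary — so its limit at infinity exists and must be zero. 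Your version is shorter and avoids justifying the enlarged class of test functions for \eqref{StaticEPIntegral2}, at the cost of leaning on the $W^{1,1}$ identification as a black box; both arguments ultimately rest on the same identity \eqref{StaticEPIntegral}, so the difference is one of packaging rather than substance.
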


\begin{proof}  It also not difficult to see that if  $h\in C^\infty([0,\infty))$, $h(0)=0$, and $h(t)=1$ for all $t$ large, then $\Psi(t,x)=h(t)\eta(x)$ is a valid test function in \eqref{StaticEPIntegral2}.  Substituting this test function yields
\begin{align*}
0 &= \int^\I_0 \int_{\R^d}e^{-\delta t}\left\{(\partial_t\Psi+ (v\cdot \nabla)\Psi)\cdot |v|^{p-2}v - \nabla\cV(\sigma(t))\cdot \Psi(t)\right\}d\sigma_t(x)dt \\
 &= \lim_{T\rightarrow \infty}\int^T_0 \int_{\R^d}e^{-\delta t}\left\{(\partial_t\Psi+ (v\cdot \nabla)\Psi)\cdot |v|^{p-2}v - \nabla\cV(\sigma(t))\cdot \Psi(t)\right\}d\sigma_t(x)dt \\
  & = \lim_{T\rightarrow \infty}\int^T_0 \int_{\R^d}e^{-\delta t}\left\{(g'(t)\eta+ (v\cdot \nabla)g(t)\eta)\cdot |v|^{p-2}v - \nabla\cV(\sigma(t))\cdot g(t)\eta\right\}d\sigma_t(x)dt \\
  & = \lim_{T\rightarrow \infty} \left\{g(t)\left.\int_{\R^d}e^{-\delta t}|v|^{p-2}v\cdot \eta d\sigma_t\right|^T_0 +\right.  \\
& \left. \int^T_0 g(t)\left[-\frac{d}{dt}\int_{\R^d}e^{-\delta t}|v|^{p-2}v\cdot \eta d\sigma_t +\int_{\R^d}e^{-\delta t}\left\{|v|^{p-2}v\cdot((v\cdot\nabla) \eta) - \nabla V(\sigma)\cdot \eta\right\}d\sigma_t \right] \right\}\\
&=\lim_{T\rightarrow \infty}\int_{\R^d}e^{-\delta T}|v(x,T)|^{p-2}v(x,T)\cdot \eta(x) d\sigma_T(x)
\end{align*}
which is \eqref{LimitTinf}.

\par Now let  $f\in C^\infty_c(0,\infty)$ with $f(T)=1$, and set
$$
\Psi(x,t):=\eta(x)f(t)\in C^\infty_c(\R^d\times (0,\infty);\R^d).
$$
Also denote $\sigma^\epsilon$ and $v^\epsilon$ as in the proof of Theorem \ref{EPthm}.
As $\sigma$ is a minimizer of \eqref{DPPfunctional},
\begin{align*}
\cU(\mu)&=e^{-\delta T}\cU(\sigma(T))+\int^T_0e^{-\delta t}\left(\frac{1}{p}||\dot\sigma(t)||^p -\cV(\sigma(t)) \right)dt\\
&=e^{-\delta T}\cU(\sigma(s))+\int^T_0e^{-\delta t}\left(\frac{1}{p}\int_{\R^d}|v(x,t)|^pd\sigma_t(x) -\cV(\sigma(t)) \right)dt\\
&\le e^{-\delta T}\cU(\sigma^\epsilon(T))+\int^T_0e^{-\delta t}\left(\frac{1}{p}||\dot\sigma^\epsilon(t)||^p -\cV(\sigma^\epsilon(t)) \right)dt\\
&\le e^{-\delta T}\cU(\sigma^\epsilon(T))+\int^T_0e^{-\delta t}\left(\frac{1}{p}\int_{\R^d}|v^\epsilon(x,t)|^pd\sigma^\epsilon_t(x)
-\cV(\sigma^\epsilon(t)) \right)dt\\
&\le e^{-\delta T}\cU((\id+\epsilon\eta)_{\#}\sigma(T))\\
& \quad +\int^T_0e^{-\delta t}\left(\frac{1}{p}\int_{\R^d}|v+\epsilon(\partial_t\Psi + v\cdot \nabla\Psi)|^pd\sigma_t -\cV((\id+\epsilon\Psi(t))_{\#}\sigma(t)) \right)dt\\
\end{align*}
for each $\epsilon>0$ small enough. Similar to computations performed in the proof of Theorem \ref{EPthm}
\begin{align*}
\liminf_{\epsilon\rightarrow 0^+}\frac{\cU((\id +\epsilon \eta)_{\#}\sigma(T))-\cU(\sigma(T))}{\epsilon}  & \ge -e^{\delta T}\int^T_0 \int_{\R^d}e^{-\delta t}\left\{(\partial_t\Psi+ (v\cdot \nabla)\Psi)\cdot |v|^{p-2}v  \right. \\
& \hspace{1.5in} \left.- \nabla\cV(\sigma(t))\cdot \Psi(t)\right\}d\sigma_tdt\\
 & = -e^{\delta T} \int^T_0\left[ f'(t) \left\{\int_{\R^d}e^{-\delta t}|v|^{p-2}v\cdot \eta d\sigma_t\right\} +  \right. \\
 &\quad \left. f(t)\left\{  \int_{\R^d}e^{-\delta t}\left\{|v|^{p-2}v\cdot((v\cdot\nabla) \eta) - \nabla \mathcal{V}(\sigma)\cdot \eta\right\}d\sigma_t \right\}\right]dt \\
 &=  -\int_{\R^d}|v(x,T)|^{p-2}v(x,T)\cdot \eta(x) d\sigma_T(x)
\end{align*}
The last inequality follows from an integration by parts and the use of the identity \eqref{StaticEPIntegral}.
\end{proof}
We believe that more is true.  We prove a special case of the below conjecture in the case of simple potentials; see Corollary \ref{SpecConj}.
\begin{conj}\label{GradConj} Under the assumptions of Theorem \ref{EPthm},
$$
-|v(t)|^{p-2}v(t)\in \nabla^- \cU(\sigma(t)), \quad \text{a.e}\;\; t\in (0,\infty).
$$
$\nabla^-\mathcal{U}$ is specified in Definition \ref{DiffDef}.
\end{conj}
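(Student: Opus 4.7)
The goal is to upgrade the one-sided first-order bound \eqref{WeakConj} — which only handles pushforwards by $\eta \in C^1_c(\R^d; \R^d)$ — to the full subdifferential inequality one expects from the definition of $\nabla^-\cU$, namely
\begin{equation*}
\cU(\nu) - \cU(\sigma(t)) \ge -\int_{\R^d \times \R^d} |v(x,t)|^{p-2} v(x,t) \cdot (y - x) \, d\pi(x,y) + o(W_p(\sigma(t), \nu)),
\end{equation*}
valid for every competitor $\nu \in \cMp$ and some optimal $\pi \in \Gamma(\sigma(t), \nu)$. The plan is to attack this via optimal-transport approximation combined with the dynamic programming principle of Proposition \ref{DPPprof}.

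First I would restrict attention to times $t > 0$ at which $\sigma(t)$ is absolutely continuous with respect to Lebesgue measure. By Brenier's theorem any nearby $\nu$ is then of the form $\nu = T_\# \sigma(t)$ for a $W_p$-optimal transport map $T$, with displacement $\xi := T - \id$ satisfying $\|\xi\|_{L^p(\sigma(t))} = W_p(\sigma(t),\nu)$. Approximate $\xi$ in $L^p(\sigma(t))$ by $\eta_n \in C^1_c(\R^d; \R^d)$. Applying \eqref{WeakConj} to each $\eta_n$ yields a one-sided infinitesimal bound for small $\epsilon > 0$, which must be integrated to $\epsilon = 1$ and then passed to the limit $n \to \infty$ to reach the finite displacement $\xi$. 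The intermediate bridge between $(\id + \eta_n)_\# \sigma(t)$ and $\nu = (\id + \xi)_\# \sigma(t)$ would be controlled by the uniform-continuity estimate proved in Section \ref{PropSec}, which converts $L^p$-closeness of vector fields into $W_p$-closeness of their pushforwards and hence, through Lipschitz dependence, into $\cU$-closeness.

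The principal obstacle — and presumably the reason the conjecture remains open beyond the simple-potential case of Corollary \ref{SpecConj} — is converting the infinitesimal $\liminf$ in \eqref{WeakConj} into a finite, $\epsilon=1$ expansion with remainder of order $o(\|\eta_n\|_{L^p(\sigma(t))})$. A $\liminf$ of difference quotients by itself carries no information about higher-order behavior. In the classical Euclidean setting this gap is closed by semiconcavity of $u$ along lines, which follows from the Lax--Oleinik representation and suitable regularity of $V$. A plausible route in the Wasserstein setting would be to establish displacement semiconcavity of $\cU$ using the viscosity property from Theorem \ref{mainThm}, perhaps via a duality argument representing $\cU$ as an inf-convolution in $W_p$. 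Alternatively, one might seek a uniform second-order remainder directly by perturbing an action-minimizing trajectory simultaneously along two smooth fields and combining the resulting variational inequalities with regularity estimates for the Euler--Poisson system \eqref{StaticEP}. Either route appears to require substantially new ingredients beyond the tools developed in the present paper.
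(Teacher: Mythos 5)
You have not produced a proof, and in fact you say so yourself: the final paragraph of your proposal concedes that the central step requires ``substantially new ingredients.'' That concession is accurate, and it is exactly why the statement is labelled a \emph{conjecture} in the paper: the paper does not prove it in general either. The concrete gap is the one you name. The inequality \eqref{WeakConj} is only a $\liminf$ of one-sided difference quotients along the single ray $\epsilon\mapsto(\id+\epsilon\eta)_{\#}\sigma(t)$ for a \emph{fixed} $\eta\in C^1_c(\R^d;\R^d)$, with no rate and no uniformity in $\eta$. Membership in $\nabla^-\cU(\sigma(t))$ (Definition \ref{DiffDef}) demands a lower bound with a remainder $o(W_p(\sigma(t),\nu))$ that is uniform over \emph{all} competitors $\nu\to\sigma(t)$, including measures that are not pushforwards of $\sigma(t)$ by small smooth perturbations of the identity, and including measures $\sigma(t)$ that are not absolutely continuous (so the Brenier-map reduction you start with is itself not available in general). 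Your bridging device --- the modulus-of-continuity estimate of Section \ref{PropSec} --- only yields $|\cU(\nu_1)-\cU(\nu_2)|\le \omega(W_p(\nu_1,\nu_2))/\delta$, which is first-order information and cannot produce the second-order (little-$o$) control needed to pass from the infinitesimal bound to the finite subdifferential inequality; the semiconcavity or inf-convolution structure you invoke as a ``plausible route'' is not established anywhere in the paper and is precisely what is missing.

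For comparison, the only case the paper settles (Corollary \ref{SpecConj}) is proved by a completely different and much more explicit mechanism, available only for simple potentials: Proposition \ref{SpecForm} gives the formula $\cU(\mu)=\int_{\R^d}u\,d\mu$, Corollary \ref{CompareEqLemma} shows $|v(t)|^{p-2}v(t)=-\nabla u$ $\sigma(t)$-a.e.\ along minimizers, and Lemma \ref{DiffSimplePot} shows that a functional of the form $\mu\mapsto\int g\,d\mu$ with $g$ Lipschitz and $\mu$-a.e.\ differentiable has Wasserstein derivative $\nabla g$. None of these ingredients extends to a general Lipschitz $\cV$ satisfying \eqref{DerivativeV}, so your outline, as it stands, does not close the conjecture and does not recover even the special case by the route you describe.
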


\par We now will address some issues related to the existence of minimizing paths by using ideas from the calculus of variations. It will be necessary for us to employ the narrow convergence of
measures. Recall that the L\'{e}vy-Prokhorov metric
\begin{equation}\label{LPmetric}
\Lambda(\mu,\nu):=\inf\left\{\epsilon>0: \mu(A)\le \nu(A^\epsilon)+\epsilon, \; \nu(A)\le \mu(A^\epsilon)+\epsilon, \; \text{for Borel}\; A\subset\R^d\right\}
\end{equation}
completely metrizes $\cPp$ (see Chapter 6 of \cite{Bill}); here $A_\epsilon:=\cup_{z\in A}B_{\epsilon}(z)$. Moreover, the following inequality
\begin{equation}\label{deltaIneq}
\Lambda^2\le W_1
\end{equation}
holds (Corollary 2.18 of \cite{Huber}). The inequality \eqref{deltaIneq} is critical in the following compactness lemma.
\begin{lem}\label{BlowupLem}
Assume $\{\sigma^k\}_{k\in \N}\subset AC_{p,\delta}(\cMp)$ for some $p\in (1,\infty)$ and $\delta>0$. Further suppose
$$
\sigma^k(0)=\mu
$$
and
$$
\sup_{k}\int^\infty_0e^{-\delta t}||\dot\sigma^k(t)||^pdt<\infty.
$$
Then there is a subsequence $\{\sigma^{k_j}\}_{j\in \N}$ and $\sigma\in  AC_{p,\delta}(\cMp)$ such that\\
(i) $\sigma^{k_j}\rightarrow \sigma$ locally uniformly in $(\cPp,\Lambda)$, \\
(ii)
$$
\liminf_{j\rightarrow\infty}\int^\infty_0e^{-\delta t}||\dot\sigma^{k_j}(t)||^pdt\ge \int^\infty_0e^{-\delta t}||\dot\sigma(t)||^pdt,
$$
and $(iii)$
$$
\lim_{j\rightarrow \infty} \int^\infty_0 e^{-\delta t}\Lambda(\sigma^{k_j}(t),\sigma(t))^{2r}dt=0
$$
for all $1\le r<p$.
\end{lem}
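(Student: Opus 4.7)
The plan is to proceed in three stages: first extract the convergent subsequence by a Arzel\`{a}--Ascoli / Prokhorov argument, then check that the limit lies in $AC_{p,\delta}(\cMp)$ and establish (ii), and finally derive (iii) by splitting the improper integral.

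\medskip
\noindent\textbf{Stage 1 (Extraction of the subsequence).} Fix $T>0$. From the assumption $\sup_k\int_0^\infty e^{-\delta t}\|\dot\sigma^k(t)\|^p\,dt<\infty$ we obtain a bound $\int_0^T\|\dot\sigma^k(t)\|^p\,dt\le C_T$ independent of $k$. By H\"older's inequality,
$$
W_p(\sigma^k(s),\sigma^k(t))\le\int_s^t\|\dot\sigma^k(r)\|\,dr\le |t-s|^{1-1/p}C_T^{1/p},\quad 0\le s<t\le T,
$$
so the paths $\sigma^k$ are equi-H\"older in $W_p$, hence in $\Lambda$ thanks to \eqref{deltaIneq} and $W_1\le W_p$. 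Since $\sigma^k(0)=\mu$, the same inequality gives $\sup_{k,\,t\in[0,T]}W_p(\sigma^k(t),\mu)\le T^{1-1/p}C_T^{1/p}$, so the $p$-th moments of $\{\sigma^k(t):k\in\N,\,t\in[0,T]\}$ are uniformly bounded. This makes the family tight, hence relatively compact in the narrow topology, equivalently in $(\cPp,\Lambda)$. Arzel\`{a}--Ascoli on $[0,T]$ together with a diagonal argument over $T=1,2,\dots$ produces a subsequence $\sigma^{k_j}$ converging locally uniformly in $\Lambda$ to a path $\sigma:[0,\infty)\to\cPp$. Lower semicontinuity of the $p$-th moment under narrow convergence ensures $\sigma(t)\in\cPp$ for each $t$.

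\medskip
\noindent\textbf{Stage 2 (Limit lies in $AC_{p,\delta}(\cMp)$ and lower semicontinuity).} Using the well-known fact that $W_p$ is lower semicontinuous with respect to narrow convergence, for any $0\le s<t$,
$$
W_p(\sigma(s),\sigma(t))\le\liminf_{j\to\infty}W_p(\sigma^{k_j}(s),\sigma^{k_j}(t))\le\liminf_{j\to\infty}\int_s^t\|\dot\sigma^{k_j}(r)\|\,dr.
$$
After extracting a further subsequence so that $\|\dot\sigma^{k_j}\|$ converges weakly in $L^p([0,T], e^{-\delta t}dt)$ to some $m\in L^p$ (possible by the uniform bound), the inequality above combined with the standard metric-derivative characterization yields $\|\dot\sigma\|\le m$ a.e. Consequently
$$
\int_0^\infty e^{-\delta t}\|\dot\sigma(t)\|^p\,dt\le\int_0^\infty e^{-\delta t}m(t)^p\,dt\le\liminf_{j\to\infty}\int_0^\infty e^{-\delta t}\|\dot\sigma^{k_j}(t)\|^p\,dt,
$$
where the second inequality is weak lower semicontinuity of the $L^p$ norm. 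This simultaneously shows $\sigma\in AC_{p,\delta}(\cMp)$ and proves (ii).

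\medskip
\noindent\textbf{Stage 3 (Convergence of the $\Lambda^{2r}$ integral).} Fix $1\le r<p$. Since $\Lambda^2\le W_1\le W_p$, it suffices to bound $\int_0^\infty e^{-\delta t}W_p(\sigma^{k_j}(t),\sigma(t))^r\,dt$. Given $\epsilon>0$, H\"older's inequality with exponents $p/r$ and $p/(p-r)$ yields, for each $T>0$,
$$
\int_T^\infty e^{-\delta t}W_p(\sigma^{k_j}(t),\sigma(t))^r\,dt\le\Bigl(\int_T^\infty e^{-\delta t}W_p(\sigma^{k_j}(t),\sigma(t))^p\,dt\Bigr)^{r/p}\Bigl(\int_T^\infty e^{-\delta t}\,dt\Bigr)^{(p-r)/p}.
$$
The first factor is uniformly bounded in $j$ by the triangle inequality, the weighted Poincar\'{e} inequality \eqref{PoincareNewWeight}, and the uniform action bound; the second factor can be made smaller than $\epsilon$ by choosing $T$ large. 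On $[0,T]$, local uniform $\Lambda$-convergence gives $\sup_{t\in[0,T]}\Lambda(\sigma^{k_j}(t),\sigma(t))\to 0$, so $\int_0^T e^{-\delta t}\Lambda(\sigma^{k_j}(t),\sigma(t))^{2r}\,dt\to 0$ by dominated convergence, with domination provided by the uniform $W_p$ bound raised to the $r$-th power. Letting $j\to\infty$ and then $\epsilon\to 0$ gives (iii).

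\medskip
\noindent\textbf{Main obstacle.} The subtle point is the mismatch between the topologies: Arzel\`{a}--Ascoli only delivers $\Lambda$-convergence (narrow convergence), not $W_p$-convergence, so (iii) cannot be upgraded to a $W_p^r$ statement without additional assumptions. One must carefully exploit the inequality $\Lambda^2\le W_p$ to move between the narrow-topology output of Stage 1 and the $W_p$-based action bounds. A related delicate point is ensuring, in Stage 2, that $\sigma(t)$ really belongs to $\cPp$ rather than merely being a probability measure; this relies on the lower semicontinuity of the $p$-th moment and the fact that the starting point $\sigma^k(0)=\mu$ is fixed, which together propagate a uniform second (in fact $p$-th) moment bound through the paths.
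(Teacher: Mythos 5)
Your proof is correct and follows essentially the same route as the paper: compactness in $C([0,T];(\cPp,\Lambda))$ plus a diagonal argument for (i), weak convergence of the metric speeds in $L^p((0,\infty);e^{-\delta t}dt)$ combined with lower semicontinuity of $W_p$ for (ii), and the chain $\Lambda^2\le W_1\le W_p$ together with H\"{o}lder and the weighted Poincar\'{e} inequality for (iii). The only cosmetic differences are that you prove the local-in-time compactness directly (equi-H\"{o}lder continuity, moment bounds, Arzel\`{a}--Ascoli) where the paper cites Proposition 4.1 of \cite{HyndKim}, and in (iii) you split time at a large $T$ using the uniformly small exponential tail, while the paper applies Egorov's theorem with respect to the finite measure $e^{-\delta t}dt$; both devices yield the same estimate.
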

\begin{proof}
Note for each $T>0$,
$$
\int^T_0 ||\dot\sigma^k(t)||^pdt \le e^{\delta T}\int^T_0 e^{-\delta t}||\dot\sigma^k(t)||^pdt \le  e^{\delta T}\int^\infty_0 e^{-\delta t}||\dot\sigma^k(t)||^pdt\le Ce^{\delta T}.
$$
By Proposition 4.1 of our previous work \cite{HyndKim}, there is a subsequence of $\sigma^k$ convergent in $C([0,T]; (\cPp,\Lambda))$. Using a routine diagonalization argument, we obtain
a sequence $\{\sigma^{k_j}\}_{j\in \N}$ and $\sigma \in C([0,\infty); (\cPp,\Lambda))$ such that $\sigma=\lim_{j\rightarrow \infty}\sigma^{k_j}$ locally uniformly in the narrow topology.

\par Also note $t\mapsto ||\dot\sigma^{k_j}(t)||$ is bounded in $L^p((0,\infty); e^{-\delta t}dt)$ and so has a further subsequence (not relabeled here) that converges weakly to some $g$.
By the lower semicontinuity properties of $W_p$
\begin{align*}
W_p(\sigma(t_1), \sigma(t_2)) & \le \liminf_{j\rightarrow \infty}W_p(\sigma^{k_j}(t_1), \sigma^{k_j}(t_2)) \\
&\le  \liminf_{j\rightarrow \infty}\int^{t_2}_{t_1}||\dot\sigma^{k_j}(t)||dt\\
&=  \liminf_{j\rightarrow \infty}\int^{\infty}_{0}e^{-\delta t}\left(e^{\delta t}\chi_{[t_1,t_2]}(t)\right)||\dot\sigma^{k_j}(t)||dt\\
&= \int^{\infty}_{0}e^{-\delta t}\left(e^{\delta t}\chi_{[t_1,t_2]}(t)\right)g(t)dt\\
&= \int^{t_2}_{t_1}g(t)dt\\
\end{align*}
for $0\le t_1\le t_2<\infty$. As $g\in L^p_{\text{loc}}(0,\infty)$, $||\dot\sigma(t)||\le g(t)$ for almost every $t\ge 0$. Moreover, weak convergence implies
$$
\int^\infty_0 e^{-\delta t}||\dot\sigma(t)||dt\le \int^\infty_0 e^{-\delta t}||g(t)||dt \le \liminf_{j\rightarrow \infty}\int^\infty_0 e^{-\delta t}||\dot\sigma^{k_j}(t)||dt  .
$$
\par By Egorov's theorem, for every $\epsilon>0$, there is a Borel measurable $A_\epsilon\subset [0,\infty)$ such that $\int_{\A_\epsilon}e^{-\delta t}dt \le \epsilon$ and $\sigma^{k_j}\rightarrow \sigma$ on $[0,\infty)\setminus A_\epsilon$ uniformly in the narrow topology. Consequently for $1\le r<p$,

\begin{align*}
\int^\I_0e^{-\delta t}\Lambda(\sigma^{k_j}(t), \sigma(t))^{2r}dt & = \int_{[0,\infty)\setminus A_\epsilon}e^{-\delta t}\Lambda(\sigma^{k_j}(t), \sigma(t))^{2r}dt + \int_{A_\epsilon}e^{-\delta t}\Lambda(\sigma^{k_j}(t), \sigma(t))^{2r}dt \\
& \le \frac{1}{\delta}\left[\sup_{t\in [0,\infty)\setminus A_\epsilon}\Lambda(\sigma^{k_j}(t),\sigma(t))\right]^{2r} + \left(\int^\I_0e^{-\delta t}\Lambda(\sigma^{k_j}(t), \sigma(t))^{2p}dt\right)^{r/p}\epsilon^{1-r/p}\\
& \le \frac{1}{\delta}\left[\sup_{t\in [0,\infty)\setminus A_\epsilon}\Lambda(\sigma^{k_j}(t),\sigma(t))\right]^{2r} + \left(\int^\I_0e^{-\delta t}W_p(\sigma^{k_j}(t), \sigma(t))^{p}dt\right)^{r/p}\epsilon^{1-r/p}\\
& \le \frac{1}{\delta}\left[\sup_{t\in [0,\infty)\setminus A_\epsilon}\Lambda(\sigma^{k_j}(t),\sigma(t))\right]^{2r} +C\epsilon^{1-r/p}.
\end{align*}
The final estimate follows from the weighted Poincare inequality and the hypotheses of this theorem. Thus,
$$
\limsup_{j\rightarrow \infty}\int^\I_0e^{-\delta t}\Lambda(\sigma^{k_j}(t), \sigma(t))^{2r}dt \le C\epsilon^{1-r/p}.
$$
The claim now follows from sending $\epsilon \rightarrow 0^+$.
\end{proof}
We are finally in position to prove Theorem \ref{WeakExistenceThm}.
\begin{proof} (of Theorem \ref{WeakExistenceThm}) Let $\mu\in \cMp$ and $\epsilon_k$ be a sequence of positive numbers tending to 0 as $k\rightarrow \infty$. Choose paths $\sigma^k$ admissible for $\cU(\mu)$  such that
\begin{equation}\label{ExistenceINeq}
\cU(\mu)> -\epsilon_k + \int^\infty_0 e^{-\delta t }\left(\frac{1}{p}||\dot\sigma^{k}(t)||^p -\cV(\sigma^k(t))\right)dt
\end{equation}
for $k\in \N$. By assumption \eqref{cond18}, we manipulate \eqref{ExistenceINeq} as in Lemma \ref{BoundsLem} and conclude $\int^\infty_0 e^{-\delta t }||\dot\sigma^{k}(t)||^pdt\le C$. As a result, the sequence
$\{\sigma^k\}_{k\in \N}$ satisfies the hypotheses of Lemma \ref{BlowupLem}. Hence, there is a subsequence $\{\sigma^{k_j}\}_{j\in \N}$ and $\sigma$ admissible for $\cU(\mu)$ for which $\sigma^{k_j}$ converges to $\sigma$ as described
in the previous assertion. Employing assumption \eqref{weakVBound}, we apply the dominated convergence theorem to find
$$
\lim_{j\rightarrow \infty}\int^\infty_0 \cV(\sigma^{k_j}(t))e^{-\delta t}dt =\int^\infty_0 \cV(\sigma(t))e^{-\delta t}dt.
$$
The claim is now immediate from passing to the limit $k=k_j\rightarrow \infty$ in \eqref{ExistenceINeq}.
\end{proof}

\section{Simple potentials}\label{SimpleSec}
In this section, we will focus on value functions in the case of simple potentials \eqref{SimpleV}
$$
\cV(\mu)=\int_{\R^d}V(x)d\mu(x).
$$
We shall further assume $V\in C^1(\R^d)$ and satisfies
\begin{equation}\label{littleVbounds}
|V(x)|\le a|x|^r+b
\end{equation}
for some $a, b\in \R$ and $1\le r<p$. Under this assumption, classical value functions $u=u(x)$ \eqref{classVal} can be shown to well defined, continuous
and have minimizing paths for each $x\in \R^d$ (see Lemma \ref{ClassCompactness} in the appendix).  Moreover, using the compactness built in to
this classical optimization problem, we obtain a measurable flow map associated with minimizing paths.
This will be crucial to our proof of Proposition \ref{SpecForm}.

\begin{prop}\label{MeasSelect} Assume \eqref{littleVbounds} and define the set valued mapping
$$
F(x):=\left\{\gamma\in AC_{p,\delta}(\R^d): u(x)=\int^\I_0e^{-\delta t}\left(\frac{1}{p}|\dot\gamma(t)|^p -V(\gamma(t)) \right)dt, \;\gamma(0)=x\right\}
$$
for $x\in \R^d$.  Then there is a measurable map $\Phi:\R^d\rightarrow AC_{p,\delta}(\R^d)$ such that for each $x\in \R^d$, $\Phi(x)\in F(x).$
\end{prop}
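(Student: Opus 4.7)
The plan is to view $F$ as a multifunction from $\R^d$ into the Polish space $\Gamma := C([0,\infty); \R^d)$ endowed with the topology of local uniform convergence, inside which $AC_{p,\delta}(\R^d)$ sits as a Borel subset, and then apply a standard measurable selection theorem. Writing
\[
J(\gamma) := \int^\I_0 e^{-\delta t}\left(\frac{1}{p}|\dot\gamma(t)|^p - V(\gamma(t))\right)dt,
\]
we have $F(x) = \{\gamma \in AC_{p,\delta}(\R^d) : \gamma(0) = x,\; J(\gamma) = u(x)\}$.

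The first step is to verify that each $F(x)$ is nonempty and closed in $\Gamma$. Nonemptyness is Lemma \ref{ClassCompactness}. For closedness, take $\gamma_n \in F(x)$ with $\gamma_n \to \gamma$ locally uniformly. Since $J(\gamma_n) = u(x)$ and the potential term is controlled by \eqref{littleVbounds} together with the weighted Poincar\'{e} inequality \eqref{1DPoincare}, the kinetic integrals $\int^\I_0 e^{-\delta t}|\dot\gamma_n|^p dt$ are uniformly bounded. Extracting a weakly convergent subsequence of $\dot\gamma_n$ in $L^p((0,\infty); e^{-\delta t}dt)$, identifying its weak limit with $\dot\gamma$, and using weak lower semicontinuity of the $L^p$-norm handles the kinetic part. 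For the potential, truncate at $T$: on $[0,T]$ local uniform convergence plus continuity of $V$ gives direct convergence, while on $[T,\infty)$ the pointwise bound $|V(\gamma_n)| \leq a|\gamma_n|^r + b$ combined with the uniform Poincar\'{e} bound makes the tails uniformly small as $T \to \infty$. Altogether $J(\gamma) \leq \liminf J(\gamma_n) = u(x)$, so $\gamma \in F(x)$.

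The second step is to upgrade closedness of values to closedness of the graph $\mathrm{Gr}(F) \subset \R^d \times \Gamma$. For $(x_n, \gamma_n) \to (x, \gamma)$ with $\gamma_n \in F(x_n)$, evaluating at $t=0$ gives $\gamma(0) = x$; the same lower semicontinuity argument, combined with continuity of $x \mapsto u(x)$ (from Lemma \ref{ClassCompactness}), then yields $J(\gamma) \leq \liminf u(x_n) = u(x)$, so $\gamma \in F(x)$. Since $\mathrm{Gr}(F)$ is closed (hence Borel), $F$ is closed-valued and nonempty-valued in a Polish space, and the Kuratowski--Ryll-Nardzewski measurable selection theorem produces a Borel measurable $\Phi: \R^d \to \Gamma$ with $\Phi(x) \in F(x) \subset AC_{p,\delta}(\R^d)$ for each $x$.

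The main technical obstacle is the lower semicontinuity of $J$ under merely local uniform convergence, since neither the derivatives $\dot\gamma_n$ nor the values $V(\gamma_n)$ are controlled outright by such convergence. Both issues are resolved by first converting the $J$-bound along the sequence into an $L^p(e^{-\delta t}dt)$-bound on $\dot\gamma_n$ via the weighted Poincar\'{e} inequality and \eqref{littleVbounds}, then reusing these tools to make the tail contributions of the potential integrals uniformly small in $n$. Once this compactness--lower semicontinuity package is in place, the selection step itself is routine.
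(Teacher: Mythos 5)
Your lower semicontinuity package is sound and is essentially the same compactness argument the paper runs through Lemma \ref{ClassCompactness}: the uniform kinetic bounds from \eqref{littleVbounds} plus the weighted Poincar\'{e} inequality, weak compactness of the derivatives in $L^p((0,\infty);e^{-\delta t}dt)$, and the truncation of the potential term (using $r<p$) do give that each $F(x)$ is closed and that $\mathrm{Gr}(F)$ is closed in $\R^d\times\Gamma$. The genuine gap is the selection step. The Kuratowski--Ryll-Nardzewski theorem does not apply to a multifunction merely because its values are nonempty and closed and its graph is Borel: its hypothesis is weak measurability, i.e.\ that $\{x: F(x)\cap U\neq\emptyset\}$ is Borel for every open $U\subset\Gamma$. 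A closed graph yields this only for compact (hence $\sigma$-compact) test sets, and $\Gamma=C([0,\infty);\R^d)$ with the local uniform topology is not $\sigma$-compact; in general, projecting a closed graph over an open set produces an analytic set that need not be Borel, and there are closed-graph, closed-valued multifunctions into non-$\sigma$-compact Polish spaces that are not weakly measurable. So, as written, the decisive hypothesis of the selection theorem is never verified --- and verifying precisely this kind of measurability is the whole content of the proposition.

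The gap is repairable with material you already have. Since $u\in C(\R^d)$, for $x$ ranging in a compact set $B\subset\R^d$ every $\gamma\in F(x)$ has $|\gamma(0)|$ and $\int_0^\infty e^{-\delta t}|\dot\gamma|^p dt$ bounded by a constant depending only on $B$, so by Lemma \ref{ClassCompactness} the values $F(x)$, $x\in B$, all lie in one fixed compact subset $K_B\subset\Gamma$. Combined with your closed-graph argument this makes $F$ upper semicontinuous with compact values, hence weakly measurable, and only then does Kuratowski--Ryll-Nardzewski (or, alternatively, a uniformization theorem for Borel graphs with $\sigma$-compact sections) give a Borel selection. This is in substance what the paper does, except that it works in the finer Polish space consisting of $AC_{p,\delta}(\R^d)$ with the metric $\Sigma$ (local uniform convergence plus the weighted $L^p$ distance between derivatives) and verifies measurability through the Aubin--Frankowska characterization, by showing that $x\mapsto \mathrm{dist}(\eta,F(x))$ is lower semicontinuous via the same compactness. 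Note finally that your route only yields measurability of $\Phi$ into $\Gamma$ rather than into $(AC_{p,\delta}(\R^d),\Sigma)$; this is enough for the subsequent construction of the flow map $\Psi$, which uses only the evaluations $e(t)\circ(x,\Phi(x))$, and in fact the two Borel structures coincide on $AC_{p,\delta}(\R^d)$, but if you wish to quote the proposition as stated you should say a word about this.
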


\begin{proof}
For $\gamma,\xi\in AC_{p,\delta}(\R^d)$ define
$$
\Sigma(\gamma,\xi):=\Pi(\gamma,\xi)+\left[\int^\infty_0e^{-\delta t}|\dot\gamma(t)-\dot\xi(t)|^pdt\right]^{1/p}
$$
where
\begin{equation}\label{PiMetricDef}
\Pi(\gamma,\xi)=\sum^\infty_{k=0}\frac{1}{2^k}\frac{\max_{0\le t\le k}|\gamma(t)-\xi(t)|}{1+\max_{0\le t\le k}|\gamma(t)-\xi(t)|}.
\end{equation}
In Proposition \ref{PiMetric} of the appendix, we verify that $\Pi$ makes $C([0,\infty);\R^d)$ into a complete, separable metric space. Moreover, convergence under $\Pi$ is equivalent to local uniform convergence of $\R^d$ valued paths on $[0,\infty)$.  Employing these facts about $\Pi$, it is straightforward to check
that $AC_{p,\delta}(\R^d)$ is a complete, separable metric space under the distance $\Sigma$.

\par According to Lemma \ref{ClassCompactness} in the appendix, $F(x)\neq \emptyset$ for each $x\in \R^d$. It is also routine to verify that $F(x)\subset AC_{p,\delta}(\R^d)$ is closed.  By Theorem 8.3.1 of \cite{AB}, it suffices to show that for each $\eta\in AC_{p,\delta}(\R^d)$,
$$
x\mapsto \text{dist}(\eta,F(x)) \; \text{is Borel measurable}
$$
to conclude the assertion ($\text{dist}(\eta,S):=\inf_{\xi\in S}\Sigma(\eta,\xi)$, $S\subset AC_{p,\delta}(\R^d)$). We show in fact that this function is lower-semicontinuous on $\R^d$.

\par To this end, assume $x_n\rightarrow x\in \R^d$ and choose $x_{n_j}$ such that
$$
\liminf_{n\rightarrow \infty}\text{dist}(\eta,F(x_n))=\lim_{j\rightarrow \infty}\text{dist}(\eta,F(x_{n_j}))
$$
Employing Lemma \ref{ClassCompactness}, we may select  $\gamma_j\in F(x_{n_j})$ such that $\text{dist}(\eta,F(x_{n_j}))=\Sigma(\eta,\gamma_j)$. Notice
$$
\int^\I_0e^{-\delta t}\left(\frac{1}{p}|\dot\gamma_j(t)|^p -V(\gamma_j(t)) \right)dt=u(x_{n_j})\le C
$$
as $u\in C(\R^d)$. It follows now from \eqref{littleVbounds} and the weighted Poincar\'{e} inequality that $\int^\I_0e^{-\delta t}|\dot\gamma_j(t)|^pdt$ is bounded independently of $j\in \N$.  Moreover,
 $|\gamma_j(0)|=|x_j|\le C$. By Lemma \ref{ClassCompactness}, there is a
$\gamma\in AC_{p,\delta}(\R^d)$ such that a subsequence of $\{\gamma_j\}_{j\in \N}$ (not relabeled) converges $\gamma$  in
$(C([0,\infty);\R^d),\Pi)$ and $\dot\gamma_j\rightarrow\dot\gamma$ weakly in $L^p([0,\infty);e^{-\delta t}dt)$.  In particular, $\gamma\in F(x)$.  This convergence implies
\begin{align*}
\text{dist}(\eta,F(x)) & \le \Sigma(\eta,\gamma)\\
&\le \liminf_{j\rightarrow \infty}\Sigma(\eta,\gamma_j)\\
&= \liminf_{j\rightarrow \infty}\text{dist}(\eta, F(x_{n_j}))\\
&\le \liminf_{n\rightarrow \infty}\text{dist}(\eta, F(x_{n})).
\end{align*}
\end{proof}
Proposition \ref{MeasSelect} establishes that for each $x$, $\Phi(x)$ is a minimizing
path for $u(x)$. Define a new map
\begin{equation}\label{MeasFlow}
\Psi:\R^d\times [0,\infty)\rightarrow \R^d; (x,t)\mapsto e(t)\circ(x,\Phi(x))
\end{equation}
which is measurable, since it is the composition of measurable mappings (recall the evaluation map $e(t)$ defined in \eqref{EvalMap}).  And by definition, $t\mapsto\Psi(x,t)$ satisfies the optimality equations \eqref{ClassicalGradFlow}
$$
\begin{cases}
|\partial_t\Psi(x,t)|^{p-2}\partial_t\Psi(x,t)=-\nabla u(\Psi(x,t)), \quad t>0\\
\Psi(x,0)=x
\end{cases}.
$$
Thus $\Psi$ is a measurable flow map associated with the ODE $|\dot\gamma|^{p-2}\dot\gamma=-\nabla u(\gamma)$. Also note that since the paths $t\mapsto \Psi(x,t)$ satisfy the Euler-Lagrange equations \eqref{EulLagDel}, they are also $C^1$.  We shall now exploit this map to deduce the formula \eqref{SpecValueFun}.

\begin{proof} (of Proposition \ref{SpecForm}) 1. Let $\sigma$ be an admissible path for $\cU(\mu)$ and employ Theorem \ref{ProbRep} to write
$$
\sigma(t)=e(t)_\#\eta, \quad t\ge 0.
$$
Note that
\begin{align*}
\int^\infty_0e^{-\delta t}\left(\frac{1}{p}||\dot\sigma(t)||^p -\cV(\sigma(t)) \right)dt & = \int^\infty_0e^{-\delta t}\left(\frac{1}{p}\int_{\R^d}|v(x,t)|^pd\sigma_t(x) - \int_{\R^d}V(x)d\sigma_t(x) \right)dt \\
&=\int^\infty_0e^{-\delta t}\int_{\R^d}\left\{\frac{1}{p}|v(x,t)|^p - V(x)\right\}d\sigma_t(x)dt \\
&=\int^\infty_0e^{-\delta t}\int_{\R^d\times \Gamma}\left\{\frac{1}{p}|v(\gamma(t),t)|^p - V(\gamma(t))\right\}d\eta(x,\gamma)dt \\
&=\int_{\R^d\times \Gamma}\int^\infty_0e^{-\delta t}\left\{\frac{1}{p}|v(\gamma(t),t)|^p - V(\gamma(t))\right\}dt d\eta(x,\gamma)\\
&=\int_{\R^d\times \Gamma}\int^\infty_0e^{-\delta t}\left\{\frac{1}{p}|\dot\gamma(t)|^p - V(\gamma(t))\right\}dt d\eta(x,\gamma)\\
&\ge \int_{\R^d\times \Gamma}u(x)d\eta(x,\gamma)\\
&=\int_{\R^d}u(x)d\mu(x).
\end{align*}
The interchange of integrals follows from Remark \ref{FubiniRemark}, assumption \eqref{littleVbounds} along with \eqref{1DPoincare}, and a routine application of Fubini's theorem.
We leave the details to the reader.

\par 2. Now define the path  $\sigma(t):=\Psi(t)_\#\mu$ for $t\ge 0$, where $\Psi$ is defined in \eqref{MeasFlow}. Since $t\mapsto \Psi(x,t)$ is a minimizer for $u(x)$, \eqref{cond18} implies
$$
\int_{\R^d}\int^\infty_0 e^{-\delta t}|\partial_t\Psi(t,x)|^pdt d\mu(x)<\infty
$$
for $\mu\in \cMp$; this bound follows closely to the proof of Lemma \ref{BoundsLem}.   Also notice for $0\le s< t<\infty $,
\begin{align*}
W_p(\sigma(t),\sigma(s))^p & \le \int_{\R^d}|\Psi(t,x)-\Psi(s,x)|^pd\mu(x) \\
& \le \int_{\R^d}\left(\int^t_s|\partial_t\Psi(\tau,x)|d\tau\right)^p d\mu(x)\\
& \le (t-s)^{p-1}\int_{\R^d}\int^t_s|\partial_t\Psi(\tau,x)|^pd\tau d\mu(x)
\end{align*}
which implies
$$
\left(\frac{W_p(\sigma(t),\sigma(s))}{t-s}\right)^p\le \frac{1}{t-s}\int^t_s \left(\int_{\R^d}|\partial_t\Psi(\tau,x)|^pd\mu(x)\right)d\tau.
$$
As a result,
$$
||\dot\sigma(t)||^p\le \int_{\R^d}|\partial_t\Psi(t,x)|^pd\mu(x)
$$
for a.e. $t\ge 0$.  Therefore,

\begin{align*}
\cU(\mu)&\le \int^\infty_0e^{-\delta t}\left(\frac{1}{p}||\dot\sigma(t)||^p -\cV(\sigma(t)) \right)dt\\
&\le  \int^\infty_0 e^{-\delta t}\left( \int_{\R^d}\left(\frac{1}{p}|\partial_t\Psi(t,x)|^p - V(\Psi(t,x))\right)d\mu(x)\right)dt\\
&\le \int_{\R^d}\left( \int^\infty_0 e^{-\delta t}\left(\frac{1}{p}|\partial_t\Psi(t,x)|^p - V(\Psi(t,x))\right)dt\right)d\mu(x)\\
&=\int_{\R^d}u(x)d\mu(x).
\end{align*}
In particular, $\sigma$ is optimal for $\cU(\mu)$.
\end{proof}

\begin{cor}\label{CompareEqLemma}  Let $\sigma$ be a minimizing path for $\cU(\mu)$ with minimal velocity $v$.  Then
\begin{equation}\label{gradcondae}
|v(x,t)|^{p-2}v(x,t)=-\nabla u(x), \quad \sigma(t)\;\; \text{a.e.} \; x\in \R^d
\end{equation}
for Lebesgue almost every $t>0$.
\end{cor}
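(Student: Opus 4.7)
The plan is to combine the probabilistic representation of minimizing paths (Theorem \ref{ProbRep}) with the identity $\cU(\mu)=\int u\,d\mu$ from Proposition \ref{SpecForm}, and then to reduce the statement to the classical Euler--Lagrange equation \eqref{ClassicalGradFlow} satisfied by minimizers of $u(x)$.

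First, I would invoke Theorem \ref{ProbRep} to write the given minimizer as $\sigma(t)=e(t)_{\#}\eta$ for a Borel probability measure $\eta$ on $\R^d\times\Gamma$ concentrated on pairs $(x,\gamma)$ with $\gamma(0)=x$ and $\dot\gamma(t)=v(\gamma(t),t)$ for a.e. $t\ge 0$. Running the chain of equalities from part 1 of the proof of Proposition \ref{SpecForm} gives
$$
\cU(\mu)=\int_{\R^d\times\Gamma}\int^\I_0 e^{-\delta t}\left(\frac{1}{p}|\dot\gamma(t)|^p-V(\gamma(t))\right)dt\,d\eta(x,\gamma) \ge \int_{\R^d}u(x)\,d\mu(x),
$$
while Proposition \ref{SpecForm} asserts that the two outer expressions agree. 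Since the integrand of the outer integral is pointwise bounded below by $u(x)$ (the definition of $u$), the minimality of $\sigma$ forces equality $\eta$-almost everywhere: for $\eta$-a.e. $(x,\gamma)$, the path $\gamma$ is a classical minimizer of $u(x)$.

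Next, I would apply the classical necessary condition \eqref{ClassicalGradFlow} to each such minimizer $\gamma$: for Lebesgue a.e. $t>0$,
$$
|\dot\gamma(t)|^{p-2}\dot\gamma(t)=-\nabla u(\gamma(t)).
$$
Since $\dot\gamma(t)=v(\gamma(t),t)$ along the trajectory, this becomes $|v(\gamma(t),t)|^{p-2}v(\gamma(t),t)=-\nabla u(\gamma(t))$ jointly for $\eta\otimes dt$ almost every $(x,\gamma,t)$. Fubini then lets me swap quantifiers: for Lebesgue a.e. $t>0$, the identity $|v(\gamma(t),t)|^{p-2}v(\gamma(t),t)=-\nabla u(\gamma(t))$ holds for $\eta$-a.e. $(x,\gamma)$. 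Pushing forward through the evaluation map $e(t)$ and using $\sigma(t)=e(t)_{\#}\eta$ yields the stated identity $|v(y,t)|^{p-2}v(y,t)=-\nabla u(y)$ for $\sigma(t)$-a.e. $y\in\R^d$.

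The main obstacle is the first step, namely extracting from the scalar equality $\cU(\mu)=\int u\,d\mu$ the pathwise statement that $\eta$-a.e. curve $\gamma$ is itself a classical minimizer. This relies on two ingredients: that the inner integral is unambiguously defined and no less than $u(\gamma(0))$ for $\eta$-a.e. $(x,\gamma)$ (which uses \eqref{littleVbounds}, the weighted Poincar\'e inequality \eqref{1DPoincare}, and Fubini, exactly as in part 1 of the proof of Proposition \ref{SpecForm}), and the elementary fact that a nonnegative integrand integrating to zero must vanish a.e. A secondary technical point is to justify that $\nabla u(y)$ is meaningful for $\sigma(t)$-a.e. $y$; this follows because $\sigma(t)$ is supported on $\{\gamma(t):(x,\gamma)\in\operatorname{spt}\eta\}$, which consists of points lying on classical minimizing trajectories, along which $u$ is known to be differentiable.
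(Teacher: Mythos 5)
Your proposal is correct and follows essentially the same route as the paper: both use the probabilistic representation $\sigma(t)=e(t)_{\#}\eta$, observe (via part 1 of the proof of Proposition \ref{SpecForm} and the identity $\cU(\mu)=\int_{\R^d}u\,d\mu$) that $\eta$ is concentrated on classical minimizers of $u$, apply \eqref{ClassicalGradFlow}, identify $\dot\gamma(t)$ with $v(\gamma(t),t)$, and push forward through $e(t)$ after a Fubini-type swap of quantifiers. Your write-up merely makes explicit the equality-case argument that the paper leaves implicit, which is a faithful elaboration rather than a different method.
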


\begin{proof}  From part 1 of the proof of Proposition \ref{SpecForm}, we see
that any minimizing path's $t\mapsto\sigma(t)=e(t)_\#\eta$ is
concentrated on $(x,\gamma)$ where $\gamma$ is a minimizer for
$u(x)$. By \eqref{ClassicalGradFlow}, we conclude for every $t>0$ and $\eta$ almost every $(x,\gamma)$
$$
|\dot\gamma(t)|^{p-2}\dot\gamma(t)=-\nabla u(\gamma(t)).
$$
From the canonical uniqueness of minimal velocities (Proposition 8.4.5 of \cite{AGS}), we have for Lebesgue almost every $t>0$ and
$\eta$ almost every $(x,\gamma)$
$$
|v(\gamma(t),t)|^{p-2}v(\gamma(t),t)=|\dot\gamma(t)|^{p-2}\dot\gamma(t).
$$
In particular, we have for Lebesgue almost every $t>0$
$$
|v(e_t(x,\gamma),t)|^{p-2}v(e_t(x,\gamma),t)=-\nabla u(e_t(x,\gamma))
$$
for $\eta$ almost every $(x,\gamma)$.
Again by the probabilistic representation $\sigma(t)=e(t)_\#\eta$ from which we conclude \eqref{gradcondae}.
\end{proof}

\begin{ex}
Assume $V(x) = w\cdot x + c$. Here $w\in \R^d$ and $c\in \R$ are fixed. The associated classical value function is
$$
u(x)=-\frac{1}{\delta}\left(\frac{|w|^q}{q} + w\cdot x +c\right), \quad x\in \R^d
$$
with flow map $\Psi(x,t)=x+ t\left|\frac{w}{\delta}\right|^{q-2}\frac{w}{\delta}$.  By Proposition  \ref{SpecForm}
$$
\cU(\mu)=-\frac{1}{\delta}\left(\frac{|w|^q}{q} + w\cdot \int_{\R^d}xd\mu(x) +c\right).
$$
\end{ex}

\begin{ex}
Although Proposition \ref{SpecForm} requires that $V$ grow no more than $|x|^r$ ($1\le r<p$) for $|x|$ large, the assertion is still valid in the case
$V(x)=-\frac{|x|^p}{p}$. This follows because the associated action
$$
u(x)=|a|^{q-2}a\frac{|x|^p}{p}
$$
is $C^1$ with smooth flow map $\Psi(x,t)=xe^{-(|a|^{q-2}a)t}$; here $a$ is the unique positive solution of the equation
$$
\delta a +(p-1)|a|^q -1 =0.
$$
It can be checked in this case that the associated generalized value function is given by
$$
\cU(\mu) = \frac{|a|^{q-2}a}{p}\int_{\R^d}|x|^pd\mu(x).
$$
\end{ex}



\section{Hamilton-Jacobi equations}\label{HJESec}
This section is dedicated to the proof of the main result of this paper, Theorem \ref{mainThm}, and to the proof of a special case of Conjecture \ref{GradConj}.
Our proof Theorem \ref{mainThm} requires us to define solutions of the abstract HJE \eqref{NewHJE}
$$
\delta \cU + \frac{1}{q}||\nabla U||^q_{L^q(\mu)} +\cV(\mu) = 0, \quad \mu\in \cMp.
$$
Naturally, this will involve the {\it tangent space}
$$
\text{Tan}_\mu\cMp:=\overline{\{|\nabla \psi|^{q-2}\nabla\psi: \psi\in C^\infty_c(\R^d)\}}^{L^p(\mu)}
$$
and the {\it cotangent space}
$$
\text{CoTan}_\mu\cMp:=\overline{\{ \nabla \psi: \psi\in C^\infty_c(\R^d)\}}^{L^q(\mu)}
$$
of $\cMp$ at a given measure $\mu$. We will also make use of the following characterization
\begin{equation}\label{TanSpaceId}
\text{Tan}_\mu\cMp = \overline{\left\{\lambda (r-\id): \lambda>0, \; (\id\times r)_{\#}\mu\in \Gamma_0(\mu,r_{\#}\mu)\right\}}^{L^p(\mu)}
\end{equation}
which is proved in Theorem 8.5.1 in \cite{AGS}.  In the spirit our previous work, we present a notion of sub- and super differential of functionals on $\cMp$. This notion is inspired
by Definition 10.1.1 of \cite{AGS} and Definition 3.1 of \cite{G}.
\begin{defn}\label{DiffDef}
$\xi\in CoTan_{\mu_0}\cMp$ belongs to the {\it superdifferential} of $\cU$ at $\mu_0$ if
$$
\cU(\mu)\le \cU(\mu_0) + \inf_{\pi\in \Gamma_0(\mu_0,\mu)}\int \xi(x)\cdot (y-x)d\pi(x,y) + o(W_p(\mu_0,\mu))
$$
as $\mu\rightarrow \mu_0$. In this case, we write $\xi\in \nabla^+\cU(\mu_0)$. Likewise, $\xi\in CoTan_{\mu_0}\cMp$ belongs to the {\it subdifferential} of $\cU$ at $\mu_0$ if
$$
\cU(\mu)\ge \cU(\mu_0) + \sup_{\pi\in \Gamma_0(\mu_0,\mu)}\int \xi(x)\cdot (y-x)d\pi(x,y) + o(W_p(\mu_0,\mu))
$$
as $\mu\rightarrow \mu_0$. In this case, we write $\xi\in \nabla^-\cU(\mu_0)$.
\end{defn}
It is easy to verify that if both $\nabla^+\cU(\mu)$ and $\nabla^-\cU(\mu)$ are nonempty, then they must contain a common, single element which we denote
$\nabla \cU(\mu)$ and call the derivative of $\cU$ at $\mu$.  See Remark 3.2 of \cite{G} for more on this comment.  We are now ready to define an appropriate
type of solution to the HJE \eqref{NewHJE}. The following definition originates in the work of Gangbo, Nguyen, and Tudorascu \cite{G}.
\begin{defn}\label{ViscDef}
$\cU\in USC(\cMp)$ is a {\it viscosity subsolution} of \eqref{NewHJE} if for all $\xi\in \nabla^+\cU(\mu_0)$,
\begin{equation}\label{Subsolnproperty}
\delta \cU(\mu_0) + \frac{1}{q}||\xi||^q_{L^q(\mu_0)} +\cV(\mu_0)\le 0.
\end{equation}
$\cU\in LSC(\cMp)$ is a {\it viscosity supersolution} of \eqref{NewHJE} if for all $\xi\in \nabla^-\cU(\mu_0)$,
\begin{equation}\label{Supsolnproperty}
\delta \cU(\mu_0) + \frac{1}{q}||\xi||^q_{L^q(\mu_0)} +\cV(\mu_0)\ge 0.
\end{equation}
Finally, $\cU\in C(\cMp)$ is a {\it viscosity solution} if it is both a sub- and supersolution.
\end{defn}


\begin{proof} (of Theorem \ref{mainThm})
1. Suppose $\xi\in \nabla^+\cU(\mu_0)$. For $\lambda>0$ and $r:\R^d\rightarrow \R^d$ such that $(\id\times r)_{\#}\mu_0\in \Gamma_0(\mu_0,r_{\#} \mu_0)$, set
$$
v:=\lambda (r-\id).
$$
We know such $v$ is $L^p(\mu_0)$ dense in $Tan_{\mu_0}\cMp$ by \eqref{TanSpaceId}.

\par Define the path $\sigma(t):=(\id + t v)_{\#}\mu_0$, and notice
$$
\sigma(t)=((1-t\lambda )\id + t \lambda r)_{\#}\mu_0=((1-t\lambda)\pi^1 + t\lambda \pi^2)_{\#}[(\id\times r)_{\#}\mu_0].
$$
Hence for $t\in [0,1/\lambda]$, $\sigma$ is a constant speed geodesic joining $\mu_0$ to $r_\#\mu_0$ and
$$
||\dot\sigma(t)||=||v||_{L^p(\mu_0)}, \quad t\in (0,1/\lambda).
$$
By dynamic programming (Proposition \ref{DPPprof}), for each $0<h<1/\lambda$
\begin{align*}
\cU(\mu_0)&\le e^{-\delta h}\cU(\sigma(h)) + \int^h_0e^{-\delta t}\left(\frac{1}{p}||\dot\sigma(t)||^p -\cV(\sigma(t))\right)dt \\
& = e^{-\delta h} \cU((\id + h v)_\#\mu_0) + \left(\frac{e^{-\delta h}-1}{\delta}\right) \frac{||v||^p_{L^p(\mu_0)}}{p} - \int^h_0e^{-\delta t} \cV(\sigma(t))dt.
\end{align*}
Moreover, as
$$
\pi(t)=(\id\times(\id + t v))_{\#}\mu_0\in\Gamma_0(\mu_0,\sigma(t))
$$
for all sufficiently small $t>0$,
\begin{align*}
\cU(\mu_0) &\le e^{-\delta h}\left\{ \cU(\mu_0) + h\int_{\R^d}v\cdot \xi d\mu_0 + o(h)\right\} + \\
& +\left(\frac{e^{-\delta h}-1}{\delta}\right) \frac{||v||^p_{L^p(\mu_0)}}{p} - \int^h_0e^{-\delta t} \cV(\sigma(t))dt.
\end{align*}
Hence,
$$
\delta U(\mu_0) - \int_{\R^d}v\cdot \xi d\mu_0 -\frac{||v||^p_{L^p(\mu_0)}}{p} + \cV(\mu_0)\le o(1)
$$
as $h\rightarrow 0^+$. Sending $h$ to zero and taking the supremum over $v$ gives the desired inequality \eqref{Subsolnproperty}.

\par 2. Now suppose that  $\xi\in \nabla^-\cU(\mu_0)$. Fix $\eta>0$ and observe that for each $h>0$ there is
$\sigma_h$ admissible for $\cU(\mu_0)$ such that
$$
\cU(\mu_0) > -\eta h + \int^\infty_0e^{-\delta t}\left(\frac{1}{p}||\dot\sigma_h(t)||^p -\cV(\sigma_h(t))\right)dt.
$$
A routine computation shows
$$
\cU(\mu_0) > -\eta h + e^{-\delta h}\cU(\sigma_h(h)) + \int^h_0e^{-\delta t}\left(\frac{1}{p}||\dot\sigma_h(t)||^p -\cV(\sigma_h(t))\right)dt
$$
and employing \eqref{Vgrowth} and \eqref{cond18}, we deduce
$$
 \int^h_0e^{-\delta t}||\dot\sigma_h(t)||^pdt\le C
$$
for all $h>0$. Note $C$ is a universal constant independent of $h>0$. The following uniform estimate
\begin{equation}\label{Badhest}
W_p(\sigma_h(h),\mu)\le Ch^{1-1/p},
\end{equation}
is now immediate.

\par Let us now improve upon the estimate \eqref{Badhest}.  By our computations above and the assumption that $\xi$ belongs to the subdifferential of $\cU$ at $\mu_0$,
for any $\pi_h\in \Gamma_0(\mu_0,\sigma_h(h))$
\begin{align}\label{part2mainproofbound}
\cU(\mu_0)&>-\eta h + e^{-\delta h}\left\{\cU(\mu_0)+\int \xi(x)\cdot (y-x)d\pi_h(x,y) + o(W_p(\mu_0,\sigma_h(h)))\right\} \nonumber \\
& \quad\quad\quad\quad + \int^h_0e^{-\delta t}\left(\frac{1}{p}||\dot\sigma_h(t)||^p -\cV(\sigma_h(t))\right)dt.
\end{align}
Further observe
$$
\int^h_0e^{-\delta t}\frac{1}{p}||\dot\sigma_h(t)||^pdt\ge e^{-\delta h}\frac{W_p(\sigma_h(h),\mu_0)^p}{ph^{p-1}}
$$
and by a version of Young's inequality
$$
\iint \xi(x)\cdot (y-x)d\pi_h(x,y)\ge -2^{q/p}\frac{||\xi||^q_{L^q(\mu_0)}}{q}h - \frac{W_p(\sigma_h(h),\mu_0)^p}{2ph^{p-1}}.
$$
Combining these bounds with \eqref{part2mainproofbound}
gives
\begin{align*}
0&> -\eta + \left(\frac{e^{-\delta h}-1}{h}\right)\cU(\mu_0) - o(1)\left[\frac{W_p(\sigma_h(h),\mu_0)}{h}\right] \\
& + \frac{1}{p}\left(e^{-\delta h} -\frac{1}{2}\right)\left[\frac{W_p(\sigma_h(h),\mu_0)}{h}\right]^p - \frac{1}{h}\int^h_0e^{-\delta t}\cV(\sigma_h(t))dt -2^{q/p}\frac{||\xi||^q_{L^q(\mu_0)}}{q}
\end{align*}
as $h\rightarrow 0^+$.  Futhermore, the uniform estimate \eqref{Badhest} implies $\frac{1}{h}\int^h_0e^{-\delta t}\cV(\sigma_h(t))dt=\cV(\mu_0)+o(1)$ as $h\rightarrow 0^+$, so we are now able to conclude
\begin{equation}\label{Betterhest}
W_{p}(\sigma_h(h), \mu_0)\le Ch.
\end{equation}
This estimate is valid for all $h>0$ small enough for some constant $C$; note the improvement over our previous estimate \eqref{Badhest}.

\par We return to \eqref{part2mainproofbound}, make use of \eqref{Betterhest}, and again apply Young's inequality to
find
\begin{align*}
\cU(\mu_0) & > - \eta h +e^{-\delta h}\left[\cU(\mu_0) - \frac{||\xi||^q_{L^q(\mu_0)}}{q}h - \frac{W_p(\sigma(h),\mu_0)^p}{ph^{p-1}} +o(h)\right] \\
& \quad\quad\quad + \int^h_0e^{-\delta t}\left(\frac{1}{p}||\dot\sigma(t)||^p -\cV(\sigma(t))\right)dt\\
&\ge -\eta h + e^{-\delta h}\cU(\mu_0) - e^{-\delta h}\frac{||\xi||^q_{L^q(\mu_0)}}{q}h +o(h) - \cV(\mu_0)h
\end{align*}
or
$$
\delta \cU(\mu_0) + \frac{||\xi||^q_{L^q(\mu_0)}}{q} +\cV(\mu_0) >-\eta +o(1).
$$
Sending, $h$ and then $\eta\rightarrow 0^+$ verifies the sought after inequality \eqref{Supsolnproperty}.
\end{proof}
We will conclude this paper with a proof of Conjecture \ref{GradConj} in the case of simple potentials \eqref{SimpleV}. To this end, we shall need a lemma.

\begin{lem}\label{DiffSimplePot}
Let $g:\R^d\rightarrow \R$ be a Lipschitz continuous function and set
$$
{\cal G}(\mu):=\int_{\R^d}g(x)d\mu(x), \quad \mu\in \cMp.
$$
If $g$ is differentiable for $\mu$ almost every $x\in \R^d$, then ${\cal G}$ is differentiable at $\mu$ (in the sense of Definition \ref{DiffDef}) and
$$
\nabla {\cal G}(\mu)=\nabla g, \quad \mu\; \text{a.e.}\; x\in \R^d.
$$
\end{lem}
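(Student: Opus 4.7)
The plan is to verify the two-sided differential condition of Definition \ref{DiffDef} by establishing an asymptotic expansion for $\mathcal{G}(\nu)-\mathcal{G}(\mu)$. The starting point is the trivial identity
$$
\mathcal{G}(\nu)-\mathcal{G}(\mu)=\iint_{\R^d\times\R^d}(g(y)-g(x))\,d\pi(x,y),
$$
valid for any $\pi$ with marginals $\mu,\nu$ (in particular for any $\pi\in\Gamma_0(\mu,\nu)$) by the marginal conditions. Writing the Taylor remainder $R(x,y):=g(y)-g(x)-\nabla g(x)\cdot(y-x)$, the Lipschitz bound yields $|R(x,y)|\le 2L|y-x|$ where $L$ is the Lipschitz constant, while the differentiability hypothesis gives $R(x,y)/|y-x|\to 0$ as $y\to x$ for $\mu$-a.e.\ $x$.

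The technical heart of the argument is the remainder bound
$$
\iint|R(x,y)|\,d\pi(x,y)=o(W_p(\mu,\nu)) \qquad \text{as}\ \nu\to\mu,
$$
uniformly over $\pi\in\Gamma_0(\mu,\nu)$. For this I would introduce the modulus $\omega(x,\delta):=\sup_{0<|y-x|<\delta}|R(x,y)|/|y-x|$, which is bounded above by $2L$ and tends to zero $\mu$-a.e.\ as $\delta\to 0^+$. Egorov's theorem then produces a set $E_\epsilon$ with $\mu(\R^d\setminus E_\epsilon)<\epsilon$ on which this convergence is uniform, so that for prescribed $\eta>0$ there is $\delta_0>0$ with $\omega(x,\delta)<\eta$ whenever $x\in E_\epsilon$ and $\delta<\delta_0$. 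I would then split the integral of $|R|$ against $\pi$ into the three regions $\{x\in E_\epsilon,\,|y-x|<\delta_0\}$, $\{x\in E_\epsilon,\,|y-x|\ge\delta_0\}$, and $\{x\notin E_\epsilon\}$. The first contributes at most $\eta W_1(\mu,\nu)\le\eta W_p(\mu,\nu)$; the second is bounded by $2L\, W_p(\mu,\nu)^p/\delta_0^{p-1}=o(W_p(\mu,\nu))$ using $|R|\le 2L|y-x|^p/\delta_0^{p-1}$ and optimality of $\pi$; and the third is bounded by $2L\,\epsilon^{1-1/p}W_p(\mu,\nu)$ via H\"{o}lder's inequality applied to $\iint_{x\notin E_\epsilon}|y-x|\,d\pi$. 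Passing to the limit $\nu\to\mu$ and then letting $\eta,\epsilon\to 0^+$ gives the claim.

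Combining the identity and the remainder estimate, for every $\pi\in\Gamma_0(\mu,\nu)$,
$$
\mathcal{G}(\nu)-\mathcal{G}(\mu)=\iint\nabla g(x)\cdot(y-x)\,d\pi(x,y)+o(W_p(\mu,\nu)),
$$
so both the infimum and the supremum over $\pi\in\Gamma_0(\mu,\nu)$ of $\iint \nabla g\cdot(y-x)\,d\pi$ agree with $\mathcal{G}(\nu)-\mathcal{G}(\mu)$ up to $o(W_p)$. This verifies simultaneously that $\nabla g\in\nabla^+\mathcal{G}(\mu)$ and $\nabla g\in\nabla^-\mathcal{G}(\mu)$, and the remark after Definition \ref{DiffDef} identifies $\nabla\mathcal{G}(\mu)=\nabla g$. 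To place $\nabla g$ in $\text{CoTan}_\mu\cMp$ I would set $\psi_{n}:=\chi_{R_n}\cdot(g*\rho_{1/n})\in C^\infty_c(\R^d)$ for suitable cutoffs $\chi_{R_n}$ and mollifiers $\rho_{1/n}$; a direct calculation shows $\nabla(g*\rho_\epsilon)(x)\to\nabla g(x)$ at every point of classical differentiability of $g$, hence $\mu$-a.e., and since the $|\nabla \psi_n|$ are uniformly bounded, dominated convergence in $L^q(\mu)$ closes the argument.

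I expect the main obstacle to be precisely the uniform estimate on the Taylor remainder in the $W_p$-scale: pointwise differentiability gives only a local-at-$x$ bound, and upgrading this into an integral bound of order $o(W_p)$ requires both the Egorov-based uniformization on $E_\epsilon$ and the $p>1$ H\"{o}lder trick to control the complement. The remaining ingredients, including the cotangent approximation, are essentially standard once the remainder estimate is in place.
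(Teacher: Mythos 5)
Your argument is correct and rests on the same basic identity as the paper's proof (write $\mathcal{G}(\nu)-\mathcal{G}(\mu)$ against an optimal plan, separate the first-order term from a Taylor remainder quotient that is bounded by a multiple of $\mathrm{Lip}(g)$ and vanishes $\mu$-a.e.\ on the diagonal), but the technical core is handled by a genuinely different device. The paper applies H\"{o}lder first, reducing matters to showing $\iint |w(x,y)|^q\,d\gamma_n\to 0$ along optimal plans $\gamma_n\in\Gamma_0(\nu_n,\mu)$, and gets this from dominated convergence (choosing $\epsilon$ with $\int\sup_{y\in B_\epsilon(x)}|w|^q\,d\mu\le\delta$) combined with the narrow convergence of optimal plans to $(\id\times\id)_\#\mu$ (Remark 7.1.6 of \cite{AGS}), which kills the mass of $\{|x-y|\ge\epsilon\}$. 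You instead uniformize the pointwise convergence by Egorov and split into three regions, controlling the far-off-diagonal part by $|y-x|\le |y-x|^p/\delta_0^{p-1}$ together with the $p$-th moment of the plan, and the Egorov exceptional set by H\"{o}lder plus the marginal condition. Your route avoids quoting the convergence of optimal plans and is in that sense more elementary and self-contained; the paper's route avoids Egorov and is shorter once the AGS remark is invoked. Both give the expansion uniformly over $\pi\in\Gamma_0(\mu,\nu)$, which is exactly what is needed to verify the infimum and supremum versions in Definition \ref{DiffDef} simultaneously.

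Two small repairs. First, your near-diagonal bound labelled ``$\eta W_1(\mu,\nu)\le \eta W_p(\mu,\nu)$'' should simply be $\eta\iint|y-x|\,d\pi\le \eta W_p(\mu,\nu)$ by H\"{o}lder, since $\pi$ is optimal for $W_p$, not for $W_1$; also, as in the paper, you should fix an everywhere-defined Borel representative of $\nabla g$ (bounded by a constant times $\mathrm{Lip}(g)$) so that the remainder $R$ and your modulus $\omega(x,\delta)$ are measurable (the supremum can then be taken over a countable dense set of $y$). Second, in the cotangent step a unit-width cutoff makes the product term $(g*\rho_{1/n})\nabla\chi_{R_n}$ of size comparable to $R_n$, which is not uniformly bounded, and since $\mu$ is only assumed to have finite $p$-th moments (while $q>p$ when $p<2$) plain dominated convergence can fail; use instead a cutoff with $|\nabla\chi_{R_n}|\le C/R_n$ supported in $\{R_n\le|x|\le 2R_n\}$ (or subtract a suitable constant from $g$), which makes that term uniformly bounded and supported where $\mu$ has vanishing mass. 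Note that the paper's proof does not address membership of $\nabla g$ in $\mathrm{CoTan}_\mu\cMp$ at all, so your extra verification, once patched as above, is a useful addition rather than a redundancy.
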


\begin{proof}
First we choose a Borel measurable mapping $\zeta:\R^d\rightarrow \R^d$ that equals $\nabla g$ for $\mu$ almost every $x\in\R^d$. For instance, we may select $\zeta:=(\zeta^1,\dots,\zeta^n)$ as
follows:
$$
\zeta^i(x):=\limsup_{n\rightarrow \infty}\frac{g\left(x+\frac{e_i}{n}\right)-g(x)}{\frac{1}{n}}, \quad x\in \R^d
$$
for $i=1,\dots,n$.  And note that as $g$ is Lipschitz continuous, this choice gives $|\zeta(x)|\le \sqrt{d}\;\text{Lip}(g)$ for all $x\in \R^d$.

\par Next, define
$$
\omega(x,y):=
\begin{cases}
\left(g(y)-g(x) - \zeta(x)\cdot (y-x)\right)/|y-x|, \quad y\neq x\\
0, \quad x=y
\end{cases}.
$$
Clearly, $\omega$ is a Borel measurable on $\R^d\times\R^d$ and $|\omega(x,y)|\le(1+\sqrt{d})\text{Lip}(g)$ for all $x,y\in \R^d$. And by hypothesis,
\begin{equation}\label{LimSupAss}
\limsup_{y\rightarrow x}w(x,y):=\lim_{\epsilon \rightarrow 0^+}\sup_{y\in B_\epsilon(x)}w(x,y)=0
\end{equation}
for $\mu$ almost every $x\in \R^d$.  For each $\gamma\in \Gamma_o(\mu,\nu)$
$$
\int_{\R^d}g(y)d\nu(y)=\int_{\R^d}g(x)d\mu(x)+ \iint_{\R^d\times\R^d}\zeta(x)\cdot (y-x)d\gamma(x,y) + \iint_{\R^d\times\R^d}|x-y|w(x,y)d\gamma(x,y).
$$
Therefore, it suffices to show
$$
 \iint_{\R^d\times\R^d}w(x,y)|x-y|d\gamma(x,y)=o(W_p(\mu,\nu))
$$
as $\nu\rightarrow \mu$.

\par Assume $\{\nu_n\}_{n\in \N}$ is a sequence converging $\mu$ as $n\rightarrow \infty$, with $W_p(\nu_n,\mu)>0$ for each $n$.
We know from Remark 7.1.6 of \cite{AGS} that for any $\gamma_n\in \Gamma_o(\nu_n,\mu)$,
\begin{equation}\label{LimMeas}
\gamma_n\rightarrow (\id\times \id)_\# \mu
\end{equation}
narrowly in ${\cal P}_p(\R^d\times\R^d)$. We also have by H\"{o}lder's inequality,
\begin{equation}\label{LastLittleOh}
 \iint_{\R^d\times\R^d}w(x,y)|x-y|d\gamma_n(x,y) \le W_p(\nu_n,\mu) \left(\iint_{\R^d\times\R^d} |w(x,y)|^qd\gamma_n(x,y)\right)^{1/q}.
\end{equation}
\par Now let $\delta>0$ and choose $\epsilon>0$ so that
$$
\int_{\R^d}\sup_{y\in B_\epsilon(x)}|w(x,y)|^q d\mu(x)\le \delta.
$$
Such an $\epsilon>0$ exists by a simple application of Lebesgue's dominated convergence theorem. Also notice
\begin{align*}
\iint_{\R^d\times\R^d} |w(x,y)|^qd\gamma_n(x,y)& = \iint_{|x-y|< \epsilon}|w(x,y)|^qd\gamma_n(x,y) +\iint_{|x-y|\ge \epsilon} |w(x,y)|^qd\gamma_n(x,y) \\
& \le \int_{\R^d}\sup_{y\in B_\epsilon(x)}|w(x,y)|^qd\mu(x)\quad + \\
& \hspace{.75in} \left((1+\sqrt{d})\text{Lip}(g)\right)^q\gamma_n\left(\left\{ (x,y)\in \R^d: |x-y| \ge \epsilon\right\}\right) \\
& \le \delta + \left((1+\sqrt{d})\text{Lip}(g)\right)^q\gamma_n\left(\left\{ (x,y)\in \R^d: |x-y| \ge \epsilon\right\}\right).
\end{align*}
Thus, $\limsup_{n\rightarrow \infty}\iint_{\R^d\times\R^d} |w(x,y)|^qd\gamma_n(x,y)\le \delta$ and by \eqref{LimMeas} and \eqref{LastLittleOh},
$$
\limsup_{n\rightarrow \infty}\frac{ \iint_{\R^d\times\R^d}w(x,y)|x-y|d\gamma_n(x,y) }{ W_p(\nu_n,\mu)}\le \delta^{1/q}.
$$
This concludes the proof as $\delta$ and the sequence $\{\nu_n\}_{n\in \N}$ were chosen arbitrarily.
\end{proof}
\begin{cor}\label{SpecConj}
Assume that $\cV$ satisfies \eqref{SimpleV} with $V$ Lipschitz continuous, and also that \eqref{littleVbounds} holds. Then for any minimizing trajectory $\sigma$ for $\cU(\mu)$,
with minimal velocity $v$,
$$
-|v(t)|^{p-2}v(t)=\nabla\cU(\sigma(t))
$$
for Lebesgue almost every $t>0$.
\end{cor}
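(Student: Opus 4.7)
The plan is to combine Proposition \ref{SpecForm}, Lemma \ref{DiffSimplePot}, and Corollary \ref{CompareEqLemma}. By Proposition \ref{SpecForm}, $\cU(\mu)=\int_{\R^d}u(x)d\mu(x)$, so Lemma \ref{DiffSimplePot} applied with $g=u$ is the natural tool for identifying $\nabla\cU(\sigma(t))$. To invoke it, I must verify two prerequisites: that $u$ is Lipschitz continuous on $\R^d$, and that $u$ is differentiable at $\sigma(t)$-almost every point for each relevant $t>0$.

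For the Lipschitz estimate, I would use a translation argument. Given $x,y\in\R^d$ and a minimizer $\gamma$ for $u(y)$ (which exists by Lemma \ref{ClassCompactness}), the shifted curve $\tilde\gamma(t):=\gamma(t)+(x-y)$ is admissible for $u(x)$ and satisfies $|\dot{\tilde\gamma}(t)|=|\dot\gamma(t)|$. Hence
\begin{align*}
u(x)-u(y)\le\int_0^\infty e^{-\delta t}\bigl[V(\gamma(t))-V(\gamma(t)+(x-y))\bigr]dt\le\frac{\mathrm{Lip}(V)}{\delta}\,|x-y|,
\end{align*}
and interchanging $x$ and $y$ gives the global Lipschitz bound with constant $\mathrm{Lip}(V)/\delta$.

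For differentiability $\sigma(t)$-a.e., I would exploit the probabilistic representation used in part 1 of the proof of Proposition \ref{SpecForm}: namely $\sigma(t)=e(t)_\#\eta$ with $\eta$ concentrated on pairs $(x,\gamma)$ in which $\gamma$ is a classical minimizer for $u(x)$. Such minimizers are $C^1$ since they solve the Euler-Lagrange ODE \eqref{EulLagDel}, and they obey the gradient-flow identity \eqref{ClassicalGradFlow}, namely $|\dot\gamma(t)|^{p-2}\dot\gamma(t)=-\nabla u(\gamma(t))$ for every $t>0$. In particular, $\nabla u$ is well defined at $\gamma(t)$ for every $t>0$, and by Fubini applied to the representation $\sigma(t)=e(t)_\#\eta$, $u$ is differentiable at $\sigma(t)$-a.e. $x$ for every $t>0$.

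With both prerequisites in hand, Lemma \ref{DiffSimplePot} yields $\nabla\cU(\sigma(t))(x)=\nabla u(x)$ for $\sigma(t)$-a.e. $x$ and every $t>0$; in particular $\nabla u$ is the common element of $\nabla^+\cU(\sigma(t))$ and $\nabla^-\cU(\sigma(t))$. Corollary \ref{CompareEqLemma} then supplies $-|v(x,t)|^{p-2}v(x,t)=\nabla u(x)$ for $\sigma(t)$-a.e. $x$ and Lebesgue a.e. $t>0$, and combining the two identifications completes the proof. The main obstacle is securing pointwise differentiability of $u$ along classical minimizing trajectories in $\R^d$; this is implicit in the use of \eqref{ClassicalGradFlow} in Corollary \ref{CompareEqLemma} but is the subtle ingredient that makes Lemma \ref{DiffSimplePot} applicable here.
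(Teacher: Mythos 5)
Your proposal is correct and follows essentially the same route as the paper: identify $\cU$ via Proposition \ref{SpecForm}, get $\sigma(t)$-a.e.\ differentiability of $u$ together with $\nabla u=-|v(t)|^{p-2}v(t)$ from the proof of Corollary \ref{CompareEqLemma}, and conclude with Lemma \ref{DiffSimplePot}. The only difference is that you spell out the Lipschitz continuity of $u$ via the translation argument, which the paper simply asserts; that is a welcome (and correct) addition rather than a deviation.
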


\begin{proof}
By our assumptions, the corresponding classical value function $u$
\eqref{classVal} with potential $V$ also Lipschitz continuous. The
proof of Corollary \ref{CompareEqLemma} yields that $u$ is
differentiable at $\sigma(t)$ almost every $x\in \R^d$ with
$\nabla u= -|v(t)|^{p-2}v(t)$ for Lebesgue almost every $t>0$. The
desired conclusion now follows from Lemma \ref{DiffSimplePot} and
Proposition \ref{SpecForm}.
\end{proof}

\appendix

\section{Probabilistic representation}
This appendix is dedicated to proving a probabilistic representation of $AC_{p,\delta}(\cMp)$ paths.  The foundation of this
assertion is the following compactness result for paths in $AC_{p,\delta}(\R^d)$.  An immediate consequence of the following lemma is that
when $V$ satisfies the bound \eqref{littleVbounds}, the classical value function $u=u(x)$ \eqref{classVal} has minimizing paths for every $x\in \R^d$.
\begin{lem}\label{ClassCompactness}
Assume the sequence $\{\gamma_k\}_{k\in \N}\subset AC_{p,\delta}(\R^d)$ satisfies
$$
\sup_{k\in \N}|\gamma_k(0)|<\infty
$$
and
$$
\sup_{k\in \N}\int^\infty_0 e^{-\delta t}|\dot\gamma_k(t)|^pdt<\infty.
$$
Then there is a subsequence $\{\gamma_{k_j}\}_{j\in \N}$ and $\gamma \in AC_{p,\delta}(\R^d)$ such that $\gamma_{k_j}\rightarrow \gamma$ locally uniformly on $[0,\infty)$ and
$$
\lim_{j\rightarrow \infty}\int^\infty_0|\gamma_{k_j}(t)-\gamma(t)|^re^{-\delta t}dt=0
$$
for each $1\le r<p$.
\end{lem}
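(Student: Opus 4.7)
The plan is to combine Arzelà--Ascoli-type reasoning for local uniform convergence with weak $L^p$ compactness to identify a limit in $AC_{p,\delta}(\R^d)$, and then apply dominated convergence on $[0,\infty)$ with the weight $e^{-\delta t}$ to obtain the weighted $L^r$ statement.

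First I would exploit the weighted $L^p$ bound on $\dot\gamma_k$ to obtain uniform Hölder continuity on compact intervals: for $0\le s\le t\le T$, Hölder's inequality gives
$$
|\gamma_k(t)-\gamma_k(s)|\le (t-s)^{1-1/p}\!\left(\int_s^t |\dot\gamma_k(\tau)|^p\,d\tau\right)^{1/p}\le (t-s)^{1-1/p}e^{\delta T/p}\,C^{1/p},
$$
where $C$ is the supremum assumed in the hypothesis. Combined with $\sup_k|\gamma_k(0)|<\infty$, the family $\{\gamma_k\}$ is uniformly bounded and equicontinuous on each $[0,T]$. Arzelà--Ascoli plus a diagonal argument yields a subsequence $\{\gamma_{k_j}\}$ converging locally uniformly on $[0,\infty)$ to some continuous $\gamma$.

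Next I would show $\gamma\in AC_{p,\delta}(\R^d)$. The sequence $\{\dot\gamma_{k_j}\}$ is bounded in the reflexive space $L^p([0,\infty);e^{-\delta t}dt)$, so (passing to a further subsequence, not relabeled) it converges weakly to some $h$ in this space. For any $\phi\in C^\infty_c((0,\infty))$ the product $e^{\delta t}\phi(t)$ belongs to $L^q([0,\infty);e^{-\delta t}dt)$, whence
$$
\int_0^\infty \dot\gamma_{k_j}(t)\phi(t)\,dt \;=\; \int_0^\infty \dot\gamma_{k_j}(t)\bigl(e^{\delta t}\phi(t)\bigr)e^{-\delta t}\,dt \;\longrightarrow\; \int_0^\infty h(t)\phi(t)\,dt,
$$
while local uniform convergence gives $\int \gamma_{k_j}\phi'\to\int\gamma\phi'$. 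An integration by parts then identifies $\dot\gamma=h$ distributionally, hence a.e., so $\gamma\in AC_{\mathrm{loc}}([0,\infty);\R^d)$. Weak lower semicontinuity of the $L^p$ norm gives $\int_0^\infty e^{-\delta t}|\dot\gamma(t)|^p dt\le \liminf_j \int_0^\infty e^{-\delta t}|\dot\gamma_{k_j}(t)|^p dt<\infty$, so $\gamma\in AC_{p,\delta}(\R^d)$.

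For the weighted $L^r$ convergence, I would produce an explicit $k$-independent dominating function. Using the same Hölder trick,
$$
|\gamma_k(t)|\le |\gamma_k(0)|+\int_0^t|\dot\gamma_k(\tau)|\,d\tau \le C_0+C_0\,t^{1-1/p}e^{\delta t/p},
$$
so $|\gamma_k(t)-\gamma(t)|^r\le C_r(1+t^{r/q}e^{\delta t r/p})$ for a constant independent of $k$ (the same bound applies to $\gamma$ since it inherits the estimates in the limit). Multiplying by $e^{-\delta t}$ yields $e^{-\delta t}|\gamma_k-\gamma|^r\le C_r(e^{-\delta t}+t^{r/q}e^{-\delta t(1-r/p)})$, and the right-hand side is integrable on $[0,\infty)$ precisely because $r<p$. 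Pointwise convergence together with the dominated convergence theorem then gives $\int_0^\infty e^{-\delta t}|\gamma_{k_j}-\gamma|^r dt\to 0$.

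The only delicate point is the weighted $L^r$ tail estimate, where the gap $r<p$ is essential to make $e^{-\delta t(1-r/p)}t^{r/q}$ integrable; every other ingredient (equicontinuity, reflexive weak compactness, distributional identification of the limit) is standard and should proceed without obstruction.
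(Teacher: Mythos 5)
Your proof is correct, and its skeleton (uniform H\"older bounds from the weighted energy, Arzel\`a--Ascoli plus diagonalization, weak compactness of the derivatives in $L^p([0,\infty);e^{-\delta t}dt)$ with distributional identification of the limit and lower semicontinuity of the norm) matches what the paper intends, since it proves this lemma only by reference to the argument of Lemma \ref{BlowupLem}. Where you genuinely diverge is the weighted $L^r$ convergence: the paper's template handles the analogous step by splitting the time axis via Egorov's theorem, applying H\"older's inequality with the exponent gap $r<p$ on the exceptional set, and invoking the weighted Poincar\'e inequality \eqref{1DPoincare} to control the resulting $L^p$ quantity; you instead extract the explicit pointwise growth bound $|\gamma_k(t)|\le C\bigl(1+t^{1-1/p}e^{\delta t/p}\bigr)$, note that $e^{-\delta t}t^{r/q}e^{\delta t r/p}$ is integrable precisely because $r<p$, and conclude by dominated convergence. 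Your route is more elementary and self-contained in the Euclidean setting, since locally uniform convergence gives pointwise convergence everywhere and the dominating function is available for free from the energy bound; the Egorov--H\"older route is the one that transfers to the Wasserstein setting of Lemma \ref{BlowupLem}, where one works with the metric $\Lambda$ rather than with pointwise values of paths. Both arguments use the strict inequality $r<p$ in an essential way, and all the intermediate claims you make (the membership $e^{\delta t}\phi\in L^q(e^{-\delta t}dt)$, the identification $\dot\gamma=h$, the inheritance of the growth bound by $\gamma$) check out.
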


\begin{proof}
The proof follows closely with the argument given in the proof of Lemma \ref{BlowupLem}, so we omit the details.
\end{proof}
Recall that for $\gamma,\xi\in C([0,\infty);\R^d)$, we defined
$$
\Pi(\gamma,\xi)=\sum^\infty_{k=0}\frac{1}{2^k}\frac{\max_{0\le t\le k}|\gamma(t)-\xi(t)|}{1+\max_{0\le t\le k}|\gamma(t)-\xi(t)|}
$$
in \eqref{PiMetricDef}.  It is immediate that $\Pi$ is a metric for $C([0,\infty);\R^d)$ and that $\lim_{m\rightarrow 0}\Pi(\gamma_m,\gamma)=0$ if and only if
$\gamma_m\rightarrow \gamma$ locally uniformly on $[0,\infty)$.  We argue below that $C([0,\infty);\R^d)$ is in fact a Polish space under
this metric; this claim is of interest because $C([0,\infty);\R^d)$ is not separable under the maximum norm.
\begin{prop}\label{PiMetric} The metric space
$$
\Gamma:=\left(C([0,\infty);\R^d),\Pi\right)
$$
is complete and separable.
\end{prop}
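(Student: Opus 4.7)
The plan is to reduce both assertions to the corresponding well-known facts for each sup-norm space $(C([0,k];\R^d), \|\cdot\|_\infty)$. The workhorse observation is that $\phi(x) := x/(1+x)$ is a monotone homeomorphism of $[0,\infty)$ onto $[0,1)$ with $\phi(x) \le x$, so control on the $k$-th summand of $\Pi$ is equivalent to control of the sup-norm on $[0,k]$.

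For completeness, I would start with a $\Pi$-Cauchy sequence $\{\gamma_m\}$. For any fixed $k\in\N$, the bound
$$
2^{-k}\phi\!\left(\max_{0\le t\le k}|\gamma_m(t)-\gamma_\ell(t)|\right) \le \Pi(\gamma_m,\gamma_\ell)
$$
together with the fact that $\phi^{-1}$ is continuous at $0$ shows that $\{\gamma_m|_{[0,k]}\}$ is Cauchy in $(C([0,k];\R^d),\|\cdot\|_\infty)$ and hence has a uniform limit $\gamma^{(k)}$. Uniqueness of uniform limits forces $\gamma^{(k)}$ to restrict to $\gamma^{(k')}$ when $k'\le k$, so these fragments glue to a single continuous $\gamma:[0,\infty)\to\R^d$. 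To see $\Pi(\gamma_m,\gamma)\to 0$, split the series at index $K$ chosen with $\sum_{k>K}2^{-k}<\varepsilon/2$; the tail contributes at most $\varepsilon/2$ uniformly in $m$, while the head $\sum_{k\le K}2^{-k}\phi(\max_{[0,k]}|\gamma_m-\gamma|)$ tends to $0$ by the uniform convergence on $[0,K]$.

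For separability, I would build a countable dense subset from polygonal approximations. For each $N\in\N$, the space $(C([0,N];\R^d),\|\cdot\|_\infty)$ is separable, so it admits a countable dense subset $D_N$ (e.g.\ piecewise linear paths with rational breakpoints and rational vertex values). Extend each $f\in D_N$ to $\tilde f:[0,\infty)\to\R^d$ by $\tilde f(t):=f(N)$ for $t\ge N$, and set $\mathcal D:=\bigcup_{N\in\N}\{\tilde f : f\in D_N\}$, which is countable. Given $\gamma\in C([0,\infty);\R^d)$ and $\varepsilon>0$, choose $N$ with $\sum_{k>N}2^{-k}<\varepsilon/2$ and $f\in D_N$ with $\|f-\gamma\|_{L^\infty([0,N])}<\varepsilon/4$. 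For each $k\le N$, $\tilde f$ coincides with $f$ on $[0,k]$, so the $k$-th summand of $\Pi(\tilde f,\gamma)$ is at most $2^{-k}\cdot\varepsilon/4$; the head contributes less than $\varepsilon/2$ and the tail contributes less than $\varepsilon/2$, giving $\Pi(\tilde f,\gamma)<\varepsilon$.

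There is no real obstacle beyond the bookkeeping above: once one recognizes that $\Pi$ metrizes local uniform convergence through the monotone change of variable $\phi$, both completeness and separability reduce to classical facts on compact intervals. The only subtlety worth flagging is that the extension $\tilde f$ need not stay near $\gamma$ for $t>N$, but this is harmless because the series tail $\sum_{k>N}2^{-k}$ absorbs all contributions from that range.
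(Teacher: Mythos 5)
Your proof is correct and follows essentially the same route as the paper: completeness by extracting uniform Cauchy sequences on each $[0,k]$ via the $k$-th summand and gluing, and separability by extending countable dense subsets of $C([0,N];\R^d)$ constantly past $N$ and splitting the series into head and tail. The only cosmetic difference is your use of piecewise-linear rational approximants where the paper invokes the Weierstrass approximation theorem.
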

\begin{proof}
Let $\{\gamma_m\}\subset \Gamma$ be a Cauchy sequence, assume $k\in \N$ and fix $\epsilon\in (0,1)$.  Choose an integer $N$ so large that
$$
\Pi(\gamma_m, \gamma_n)<\frac{\epsilon}{2^k},\quad m,n\ge N.
$$
Notice that for $t\in [0,k]$
$$
\frac{1}{2^k}\frac{|\gamma_m(t)-\gamma_n(t)|}{1+|\gamma_m(t)-\gamma_n(t)|}\le \frac{1}{2^k}\frac{\max_{0\le s\le k}|\gamma_m(s)-\gamma_n(s)|}{1+\max_{0\le s\le k}|\gamma_m(s)-\gamma_n(s)|}<\frac{\epsilon}{2^k}
$$
from which we deduce
$$
|\gamma_m(t)-\gamma_n(t)|<\epsilon, \quad t\in [0,k],\;\; m,n\ge N.
$$
Hence $\{\gamma_m\}_{m\in\N}\subset C([0,k],\R^d)$ is Cauchy for each $k$ and so converges locally uniformly to some function $\gamma\in\Gamma$. Thus, $\Gamma$ is complete.

\par By the Weierstrass Approximation Theorem, $C([0,k],\R^d)$ equipped with maximum norm is separable for each $k\in \N$. Let $S_k:=\{h^k_1,h^k_2,h^k_3,\dots\}\subset C([0,k],\R^d)$
be dense and extend each function in this family continuously on $[0,\infty)$
$$
f^k_j(t):=
\begin{cases}
h^k_j(t), \quad 0\le t\le k \\
h^k_j(k), \quad k\le t<\infty
\end{cases}
$$
$j\in \N.$ We'll now show that $\cup_{k\in \N}S_k$ is dense in $\Gamma$.

\par Assume $\gamma\in \Gamma$ and $\epsilon>0$. Choose $m\in\N$ so large that
$$
\frac{1}{2^m}<\frac{\epsilon}{2},
$$
and $f^m_j\in \cup_{k\in \N}S_k$ such that
$$
\max_{0\le t\le m}|\gamma(t)-f^m_j(t)|<\epsilon/2.
$$
By the definition of $\Pi$,
\begin{align*}
\Pi(\gamma,f^m_j)&=\sum^m_{k=0}\frac{1}{2^k}\frac{\max_{0\le t\le k}|\gamma(t)-f^m_j(t)|}{1+\max_{0\le t\le k}|\gamma(t)-f^m_j(t)|}\\
& \quad + \sum^\infty_{k=m+1}\frac{1}{2^k}\frac{\max_{0\le t\le k}|\gamma(t)-f^m_j(t)|}{1+\max_{0\le t\le k}|\gamma(t)-f^m_j(t)|}\\
&\le \max_{0\le t\le m}|\gamma(t)-f^m_j(t)| + \frac{1}{2^m}\\
&<\epsilon.
\end{align*}
Hence, $\Gamma$ is separable.
\end{proof}

\begin{thm}\label{ProbRep}
Let $\sigma\in AC_{p,\delta}(\cMp)$ for some $1<p<\infty$ and $\delta>0$, and $v$ its minimal velocity. Then there is a Borel probability measure $\eta$ on
$\R^d\times \Gamma$ that is concentrated on $(x,\gamma)$ such that
$$
\begin{cases}
\dot\gamma(t)=v(\gamma(t),t), \quad \text{a.e. $t>0$}\\
\gamma(0)=x
\end{cases}
$$
and
$$
\sigma(t)=e(t)_\#\eta, \quad t\ge 0.
$$
\end{thm}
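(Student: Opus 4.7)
The plan is to apply the finite-horizon superposition principle (Theorem 8.2.1 of \cite{AGS}) on each interval $[0,N]$ and then patch the resulting measures into a single measure on $\R^d\times\Gamma$ by a tightness/narrow compactness argument, identifying the ODE-concentration of the limit via the equality case of a variational (minimal-velocity) inequality.

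\textbf{Finite-horizon representation.} For each $N\in\N$, since
$$
\int_0^N ||\dot\sigma(t)||^p\,dt\le e^{\delta N}\int_0^\infty e^{-\delta t}||\dot\sigma(t)||^p\,dt<\infty,
$$
the restriction $\sigma|_{[0,N]}$ lies in $AC_p([0,N];\cMp)$ with minimal velocity $v|_{[0,N]}$, and Theorem 8.2.1 of \cite{AGS} yields a Borel probability measure $\eta_N$ on $\R^d\times C([0,N];\R^d)$ concentrated on pairs $(x,\gamma)$ with $\gamma(0)=x$ and $\dot\gamma(t)=v(\gamma(t),t)$ a.e. on $(0,N)$, satisfying $(e_N(t))_\#\eta_N=\sigma(t)$ for $t\in[0,N]$. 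Embed into the Polish space $\R^d\times\Gamma$ (Proposition \ref{PiMetric}) via $\tilde\eta_N:=(\Phi_N)_\#\eta_N$, where $\Phi_N(x,\gamma):=(x,\tilde\gamma)$ and $\tilde\gamma(t):=\gamma(t\wedge N)$.

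\textbf{Tightness and narrow limit.} The first marginal of each $\tilde\eta_N$ is the tight measure $\mu$. For the path marginal, combine the uniform energy estimate
$$
\int_{\R^d\times\Gamma}\int_0^K|\dot{\tilde\gamma}(t)|^p\,dt\,d\tilde\eta_N\le \int_0^K||\dot\sigma(t)||^p\,dt
$$
on each $[0,K]$ with the H\"older bound $|\gamma(t)-\gamma(s)|^p\le|t-s|^{p-1}\int_s^t|\dot\gamma|^p$, Markov's inequality, and a diagonal argument in $K$, to produce Arzel\`a-Ascoli compact sets capturing all but $\varepsilon$ of the mass. Extract a narrowly convergent subsequence $\tilde\eta_{N_k}\rightharpoonup\eta$; continuity of $e(t)$ and the eventual identity $(e(t))_\#\tilde\eta_{N_k}=\sigma(t)$ give $(e(t))_\#\eta=\sigma(t)$ for every $t\ge 0$. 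The closed condition $\gamma(0)=x$ passes to the limit directly.

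\textbf{ODE property (main obstacle).} The crux is showing that $\eta$ is still concentrated on solutions of $\dot\gamma(t)=v(\gamma(t),t)$; since $v$ is only Borel, the set of such solutions is not obviously narrowly closed. I would handle this through a variational squeeze for each fixed $T>0$. On the one hand, lower semicontinuity of $\gamma\mapsto\int_0^T|\dot\gamma|^p\,dt$ on $\Gamma$ (via weak $L^p$-compactness of the derivatives) combined with narrow lower semicontinuity yields
$$
\int\int_0^T|\dot\gamma(t)|^p\,dt\,d\eta\le\liminf_{k\to\infty}\int\int_0^T|\dot\gamma(t)|^p\,dt\,d\tilde\eta_{N_k}=\int_0^T||v(t)||_{L^p(\sigma(t))}^p\,dt.
$$
Conversely, disintegrating $\eta$ in time produces an induced velocity field $\tilde v$ which, by the marginal identity, satisfies the same continuity equation as $v$, so minimality of $v$ forces $||\tilde v(t)||_{L^p(\sigma(t))}\ge||v(t)||_{L^p(\sigma(t))}$; a conditional Jensen inequality then gives $\int|\dot\gamma(t)|^p\,d\eta\ge||\tilde v(t)||_{L^p(\sigma(t))}^p$, and integrating over $t\in[0,T]$ yields the reverse inequality. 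Equality throughout forces both $\dot\gamma(t)=\tilde v(\gamma(t),t)$ $\eta$-a.s.\ (equality in Jensen) and $\tilde v=v$ $\sigma(t)$-a.e.\ (equality in minimality), so $\dot\gamma(t)=v(\gamma(t),t)$ a.e.\ on $(0,T)$ for $\eta$-a.e.\ $(x,\gamma)$. Running $T$ over $\N$ and taking a countable intersection of full-measure sets concludes. The technical heart is this variational identification; everything else is a relatively routine tightness argument on the Polish space $\R^d\times\Gamma$.
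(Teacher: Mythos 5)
Your plan is sound, but it is a genuinely different route from the paper's. The paper adapts the proof of Theorem 8.2.1 of \cite{AGS} directly to the infinite horizon: it mollifies, setting $\sigma^\epsilon=\rho^\epsilon*\sigma$ and $v^\epsilon=\rho^\epsilon*(v\sigma)/\sigma^\epsilon$, pushes $\sigma^\epsilon(0)$ forward under the flow map of the regularized ODE to build $\eta^\epsilon=(\id\times\varphi^\epsilon)_\#\sigma^\epsilon(0)$, proves tightness of $\{\eta^\epsilon\}$ using the weighted energy functional $\Xi(\gamma)=\int_0^\infty e^{-\delta t}|\dot\gamma|^p\,dt$ (whose sublevels are compact in $\Gamma$ by Lemma \ref{ClassCompactness}) together with Lemma 5.2.2 of \cite{AGS}, and then obtains the ODE-concentration of the narrow limit by modifying the concentration argument of \cite{AGS} for Borel velocity fields. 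You instead invoke the finite-horizon superposition theorem as a black box on $[0,N]$, extend paths constantly to embed into $\Gamma$, run an analogous tightness argument, and identify the concentration of the limit by an energy squeeze: lower semicontinuity of $\gamma\mapsto\int_0^T|\dot\gamma|^p$ against the exact energy of the approximations, versus the reverse bound obtained from the barycentric (disintegrated) velocity $\tilde v$, the continuity equation it satisfies, minimality of $v$, and conditional Jensen. This is a legitimate and standard identification; the points you should make explicit when writing it up are (i) the measurable construction of $\tilde v$ as a vector-valued Radon--Nikodym derivative of $(e_t,t)_\#(\dot\gamma(t)\,dt\,d\eta)$ with respect to $\sigma_t\otimes dt$ and the verification that it is a velocity field for $\sigma$, (ii) strict convexity of $z\mapsto|z|^p$ (available since $p>1$) for the equality case of Jensen, and (iii) the passage from $\|\tilde v(t)\|_{L^p(\sigma_t)}=\|v(t)\|_{L^p(\sigma_t)}$ to $\tilde v=v$, which requires the uniqueness of the minimal velocity (Proposition 8.4.5 of \cite{AGS}, already used in Corollary \ref{CompareEqLemma}), not just equality of norms. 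What your route buys is that you never have to redo the mollification or the delicate concentration step for a merely Borel field $v$; what the paper's route buys is a single construction that works on all of $[0,\infty)$ at once and transfers the concentration statement essentially verbatim from \cite{AGS}, with the only new ingredients being the Polish structure of $\Gamma$ (Proposition \ref{PiMetric}) and the compactness of sublevels of the discounted energy.
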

\begin{proof} We adapt the proof of Theorem 8.2.1 of \cite{AGS}. Recall that $v$ satisfies \eqref{vdeltapbound}, and so
$$
\int^T_0\int_{\R^d}|v(x,t)|^pd\sigma_t(x)dt<\infty
$$
for each $T>0$.  From this inequality, Proposition 8.17 and 8.18 of \cite{AGS} imply that $\sigma^\epsilon(t):=\rho^\epsilon*\sigma(t)$ and
$v^\epsilon(t):=\rho^\epsilon*(v(t)\sigma(t))/\sigma^\epsilon(t)$ satisfy the continuity equation,
$$
\int_{\R^d}|v(x,t)|^p d\sigma_t(x)\le \int_{\R^d}|v^\epsilon(x,t)|^p d\sigma^\epsilon_t(x)\quad t>0
$$
and that
$$
\sigma^\epsilon(t)=\varphi^\epsilon(t)_{\#}\sigma^\epsilon(0), \quad t\ge 0.
$$
Here $\{\rho^\epsilon\}_{\epsilon>0}$ is a family of positive mollifiers and the map $\varphi^\epsilon:[0,\infty)\times\R^d \rightarrow \R^d$ is the flow of the ODE
$$
\begin{cases}
\partial_t \varphi^\epsilon(t,x)=v^\epsilon(\varphi^\epsilon(t,x),t), \quad t>0\\
\;\;\varphi^\epsilon(0,x)=x
\end{cases}.
$$
Further denoting $\varphi^\epsilon: x\mapsto \{\varphi^\epsilon(t,x)\in \R^d: \; t\ge 0\}$, we define the measure on  $\R^d\times\Gamma$
$$
\eta^\epsilon:=(\id\times \varphi^\epsilon)_{\#}\sigma^\epsilon(0)
$$
for $\epsilon>0$.  We claim that $\{\eta^\epsilon\}_{\epsilon>0}$ is tight.

\par Let $r^1:\R^d\times \Gamma \rightarrow \R^d; (x,\gamma)\mapsto x$ and $r^2:\R^d\times \Gamma \rightarrow \R^d; (x,\gamma)\mapsto \gamma-x$. Further define
$r:=r^1\times r^2$ as mapping of $\R^d\times\Gamma \rightarrow \R^d\times\Gamma $. Note that $r\in C(\R^d\times\Gamma )$ with inverse
$$
r^{-1}(x,\gamma)=(x,\gamma+x).
$$
In particular, $r$ is a proper mapping. Further observe that $r^1_{\#}\eta^\epsilon=\sigma^\epsilon(0)=\rho^\epsilon*\sigma(0)$ converges to $\sigma(0)$ in $\cMp$ and therefore is a
tight sequence. Also defining
$$
\Xi(\gamma):=
\begin{cases}
\int^\infty_0e^{-\delta t}|\dot\gamma(t)|^pdt, \quad \gamma \in AC_{p,\delta}(\R^d)\; \text{and}\; \gamma(0)=0\\
+\infty, \quad \text{otherwise}
\end{cases}
$$
we have by our above arguments and Tonelli's theorem
\begin{align*}
\int_{\Gamma}\Xi(\gamma)d(r^2_{\#}\eta^\epsilon)(\gamma)&=\int_{\R^d}\Xi(\varphi^\epsilon(x))d((\varphi^\epsilon-\id)_{\#}\sigma^\epsilon(0))(x) \\
&= \int_{\R^d}\int^\infty_{0}e^{-\delta t}|\partial_t\varphi^\epsilon(t,x)|^pdt d\sigma^\epsilon_0(x)\\
&=\int_{\R^d}\int^\infty_{0}e^{-\delta t}|v^\epsilon(\varphi^\epsilon(t,x),t)|^pdt d\sigma^\epsilon_0(x)\\
&=\int^\infty_{0}\left\{\int_{\R^d}e^{-\delta t}|v^\epsilon(\varphi^\epsilon(t,x),t)|^p d\sigma^\epsilon_0(x)\right\}dt\\
&=\int^\infty_{0}\left\{\int_{\R^d}e^{-\delta t}|v^\epsilon(x,t)|^p d\sigma^\epsilon_t(x)\right\}dt\\
&\le\int^\infty_{0}\left\{\int_{\R^d}e^{-\delta t}|v(x,t)|^p d\sigma_t(x)\right\}dt\\
&<\infty .
\end{align*}
Since the sublevel sets of $\Xi$ are compact in $\Gamma$ by Lemma
\ref{ClassCompactness}, $\{r^2_{\#}\eta^\epsilon\}_{\epsilon>0}$
is tight. By Lemma 5.2.2 of \cite{AGS},
$\{\eta_{\epsilon}\}_{\epsilon>0}$ is consequently tight, as well.

\par It follows that there is a sequence of positive numbers $\epsilon_k$ tending to $0$ as $k\rightarrow \infty$
and a probability measure $\eta$ on $\R^d\times \Gamma$ such that $\eta^{\epsilon_k}\rightarrow \eta$ narrowly in $\R^d\times \Gamma$ as $k\rightarrow\infty$.  Moreover, for each $f\in C_b(\R^d)$ and $t\ge 0$
\begin{align*}
\int_{\R^d}f(x)d\sigma_t(x)&=\lim_{k\rightarrow \infty}\int_{\R^d}f(x)d\sigma^{\epsilon_k}_t(x) \\
&=\lim_{k\rightarrow \infty}\int_{\R^d\times \Gamma}f(\gamma(t))d\eta^{\epsilon_k}(x,\gamma) \\
&=\int_{\R^d\times \Gamma}f(\gamma(t))d\eta(x,\gamma).
\end{align*}
As a result $\sigma(t)=e(t)_{\#}\eta$, as claimed. By a slight modification of the argument given in Theorem 8.2.1 of \cite{AGS}, we conclude that $\eta$ is concentrated on $(x,\gamma)$ for which
$\dot\gamma(t)=v(\gamma(t),t)$ for almost every $t>0$ and $\gamma(0)=x$.
\end{proof}
\begin{rem}\label{FubiniRemark}
For the $\sigma$ and $\eta$ as in the above claim, Tonelli's theorem implies
\begin{align*}
\int_{\R^d\times \Gamma}\left\{\int^\infty_0 e^{-\delta t}|\dot\gamma(t)|^pdt\right\}d\eta(x,\gamma) &= \int^\infty_0 \int_{\R^d\times \Gamma}e^{-\delta t}|\dot\gamma(t)|^pd\eta(x,\gamma)dt \\
&=\int^\infty_0 \int_{\R^d\times \Gamma}e^{-\delta t}|v(\gamma(t),t)|^pd\eta(x,\gamma)dt \\
&=\int^\infty_0 \int_{\R^d}e^{-\delta t}|v(x,t)|^pd\sigma_t(x)dt \\
&=\int^\infty_0 e^{-\delta t}||\dot\sigma(t)||^pdt \\
&<\infty.
\end{align*}
Consequently, for $\eta$ almost every $(x,\gamma)\in \R^d\times \Gamma$, $\gamma\in AC_{p,\delta}(\R^d)$.
\end{rem}


\end{document}